\documentclass[a4paper,11pt]{article}

\usepackage{amsmath}
\usepackage{amsthm}
\usepackage{amsfonts}
\usepackage{amssymb}
\usepackage{color}
\usepackage{graphicx}
\usepackage{xspace}
\usepackage{bbm}
\usepackage{enumitem}

\newcommand\amax{A_{\max}}
\newcommand\losing{\cL}
\renewcommand\th{^\textrm{th}}
\newcommand\PI{PI\xspace}
\newcommand\PII{PII\xspace}
\newcommand\one{{\mathbbm 1}}

\newcommand\cL{{\mathcal L}}
\newcommand\Z{{\mathbb Z}}

\newcommand\cF{{\mathcal F}}

\newcommand\cW{{\mathcal W}}

\newcommand\cT{{\mathcal T}}

\newcommand\cH{{\mathcal H}}

\newcommand\eps{\varepsilon}

\title{Transitive Avoidance Games}
\author{J.~Robert Johnson\footnote{School of Mathematical Sciences, Queen Mary,
University of London, London E1 4NS, England} \footnote{\tt r.johnson@qmul.ac.uk}\and Imre Leader\footnote{Department
of Pure Mathematics and Mathematical Statistics,
Centre for Mathematical Sciences,
Wilberforce Road,
Cambridge CB3 0WB,
England.} \footnote{{\tt I.Leader@dpmms.cam.ac.uk}}
 \and Mark Walters$^*$\footnote{\tt m.walters@qmul.ac.uk}}

\begin{document}

\newtheorem{theorem}{Theorem}
\newtheorem{lemma}[theorem]{Lemma}
\newtheorem{proposition}[theorem]{Proposition}
\newtheorem{corollary}[theorem]{Corollary}
\newtheorem{question}{Question}
\newtheorem{problem}[question]{Problem}
\newtheorem*{quotedresult}{Lemma}
\newtheorem*{conjecture}{Conjecture}
\newtheorem*{defn}{Definition}
\theoremstyle{remark}
\newtheorem*{remark}{Remark}

\maketitle

\begin{abstract}
  Positional games are a well-studied class of combinatorial game. In
  their usual form, two players take turns to play moves in a set
  (`the board'), and certain subsets are designated as `winning': the
  first person to occupy such a set wins the game. For these games, it
  is well known that (with correct play) the game cannot be a
  second-player win.

  In the avoidance (or mis\`{e}re) form, the first person to occupy
  such a set \emph{loses} the game. Here it would be natural to expect that
  the game cannot be a first-player win, at least if the game is
  transitive, meaning that all points of the board look the same.  Our
  main result is that, contrary to this expectation, there are
  transitive games that are first-player wins, for all board sizes
  which are not prime or a power of~2. 

  Further, we show that such games can have additional properties such
  as stronger transitivity conditions, fast winning times, and `small'
  winning sets.
\end{abstract}

\section{Introduction}
Many natural combinatorial games may be viewed as `achievement games'
as follows. We have a finite set (the \emph{board}) and some of its
subsets are designated as special and called \emph{lines}. Two players take
it in turn to claim a (previously unclaimed) point of the board; the
first player to complete a line is declared the winner.  If all points
have been claimed but neither player has completed a line then the
game is considered a draw.

In this paper we consider the avoidance (or mis\`{e}re) variant -- the
game is played exactly as above except the first player to complete a
line \emph{loses}. Simmons~\cite{simmonssim} described the first
example of such a game that we are aware of (the game of Sim), and
Harary~\cite{MR671910} introduced the more general mathematical
framework. Since then such games have been considered by many authors
-- see, e.g., Beck~\cite{MR2402857,MR1898254} and
Slany~\cite{DBLP:journals/eccc/ECCC-TR99-047}. For more general
mis\`ere games see, e.g., Conway~\cite{MR1808891}, and
Albert and Nowakowski~\cite{MR2588534}. There has also been a substantial amount
of work on Avoider/Enforcer games, which are the `Maker/Breaker' analogue
for avoidance games -- see, e.g., Lu~\cite{MR1138598}, Beck~\cite{MR2402857},
and Hefetz, Krivelevich and Szab\'{o}~\cite{MR2333136}.

For achievement games, a simple strategy stealing argument shows that
the game is either a draw or a first player win (with perfect
play). However, for avoidance games the situation is not clear. Indeed
Beck~\cite{MR1898254} states ``The general open
problem is to find the avoidance version of the strategy stealing
argument.''

At first glance it looks as though the first player has a disadvantage
since he has `more' points than the second player. Of course in
general this is not true: indeed take any game that is a second player
win and add a single point not in any line. The first player picks
this new point on his first turn and thus reduces the new game to the
old game with Player~II (the second player) playing first.

However, this is a rather trivial example: we have artificially given
Player~I an advantage by giving him a special `safe' point he can
pick. Thus, it is natural to insist that the game is transitive: that
is, that the automorphism group of the game acts transitively on the
board's points. (The automorphism group is the group of all
permutations of the board $X$ that preserve the family $\losing$ of
lines.)  Informally, `all points look the same.'

One might guess that a transitive avoidance game must be a second
player win: what advantage can there be to going first when all points
are the same? Indeed, this intuition is correct when all the lines
have size 2. In general, however, it turns out that there may be
advantages to playing first.

A priori, there are two reasons why it seems to be harder to find a
game that is a Player I win when $n$ (the board size) is odd: first,
Player I has an extra move and, secondly, Player II has the final
choice of move (as Player I's last move is forced). However, whilst we
shall give examples of first player wins for both $n$ even and $n$
odd, it turns out that the simplest examples of such a game occur in
the case $n$ odd (see Section~\ref{s:odd}).

Having found examples of Player~I wins we turn to the central question
we address in this paper: namely for which $n$ does there exist a
transitive avoidance game that is a Player~I win.  Our main result
shows that, for most $n$, such games do exist.  \color{black}
\begin{theorem}\label{t:all-n}
  Suppose that $n$ is neither a power of 2 nor a prime. Then there is
  a transitive avoidance game on $n$ points that is a Player I
  win.
\end{theorem}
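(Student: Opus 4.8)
The plan is to first turn the arithmetic hypothesis into a usable factorisation, and then to construct the game on a product board whose symmetry is visible by design. Observe that $n$ is neither prime nor a power of~$2$ exactly when $n$ has an odd divisor $d$ with $1<d<n$: indeed, if $n=2^a m$ with $m$ odd then $m>1$ precisely when $n$ is not a power of~$2$, while for odd $n$ a proper odd divisor is just a proper divisor, which exists precisely when $n$ is composite. Writing $e=n/d$, this is the same as saying $n=de$ with $d$ odd, $d\ge 3$ and $e\ge 2$. I would therefore fix such a factorisation and aim to build a transitive family $\losing$ on a board $X$ with $|X|=de$ for which \PI wins the avoidance game.

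To make transitivity automatic I would put the symmetry into the board at the outset. Take $X=\Z_d\times[e]$ and choose a group $\Gamma$ acting transitively on $X$ --- for example $\Z_d$ rotating the first coordinate together with a suitable group permuting the second, or a transitive subgroup of their product. Any family $\losing$ that is a union of $\Gamma$-orbits of some fixed `template' sets is then preserved by $\Gamma$, so the resulting game is transitive; the remaining freedom is the choice of templates, which I would use to force the winning property.

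For the strategy I would exploit the odd factor $d$ through a mirroring (pairing) argument, and here it is worth noting why odd boards are the easy case. If the game has a fixed-point-free involutive automorphism $\sigma$, then \PII can answer each move $x$ of \PI by $\sigma(x)$; a short check shows that \PII then completes a line only immediately after \PI has completed its image, so \PII is never the first to lose and the game is at best a draw for \PI. Thus a necessary feature of any \PI win is that the full automorphism group has \emph{no} fixed-point-free involution --- and when $n$ is odd this holds for free, since every involution of an odd set fixes a point. When $n$ is odd I would accordingly let \PI open with a point $x_0$ fixed by a suitable involutive automorphism $\sigma$ and thereafter mirror via $\sigma$ on the remaining (even) set $X\setminus\{x_0\}$. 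The mirroring argument above, run in reverse, shows that \PI is never the first to complete a line \emph{except} possibly through a line containing $x_0$; the templates must therefore be chosen so that \PII is compelled to complete an honest line before it can ever complete such a `punctured' line through $x_0$, at which point \PII loses.

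The main obstacle, I expect, is to make all of this hold simultaneously. Three things must be arranged together: the family $\losing$ must be \emph{unavoidable}, in the sense that the board admits no $2$-colouring with neither colour class containing a line (so that the game cannot be drawn and some player is forced); \PI's mirroring-with-deviations must be shown genuinely safe, i.e.\ \PI is provably never the first to complete a line; and --- crucially for even $n$ --- the \emph{entire} automorphism group of $\losing$, not merely the group $\Gamma$ used to build it, must contain no fixed-point-free involution, for otherwise \PII escapes by mirroring. Controlling the full automorphism group while keeping $\losing$ unavoidable is the delicate part, and it is exactly this point that I expect to fail for $n$ a power of~$2$, where every transitive group carries a fixed-point-free involution; the odd divisor $d$ is what lets one break that symmetry and hand the initiative to \PI.
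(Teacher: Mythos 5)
Your reduction of the arithmetic (an odd divisor $d$ with $1<d<n$, so $n=de$ with $d\ge 3$ odd and $e\ge 2$), your orbit-construction for guaranteeing transitivity, and your observation that a fixed-point-free involution in the automorphism group forces a \PII mirroring defence (so any \PI win must avoid one, which is automatic for odd $n$) are all correct and all appear in the paper. But these are the easy parts. The actual content of the theorem --- an explicit family $\losing$ together with a proof that \PI wins --- is missing from your proposal in both cases, and the strategy you do sketch is not one that can be completed as stated. For odd $n$ your plan is: \PI opens at a fixed point $x_0$ of an involution $\sigma$ and mirrors thereafter, with the lines through $x_0$ ``to be arranged'' so that \PII loses first. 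Note what this actually requires: since \PI mirrors, \PII's final set is an arbitrary transversal $T$ of the $\sigma$-pairs (one point from each pair, chosen by \PII), so for \PI to win \emph{every} such transversal must contain a line; but then $\sigma(T)$, which is \PI's final set minus $x_0$, is also a transversal and also contains a line, so everything hinges on a delicate timing condition: whenever \PI's growing set completes a line through $x_0$, \PII must already have completed a line of his own. You name this condition but give no family satisfying it, and constructing one is precisely the difficulty. The paper's odd construction (Theorem~\ref{t:n-composite-odd}) avoids mirroring entirely: it takes $\cW$ to be the ``$p'$ points in each of $q'$ buckets'' sets, defines $\losing$ to be all other sets of that size, and gives \PI a \emph{maker}-type strategy (respond in active buckets, keep a majority of buckets active) that guarantees his first $p'q'$ points form a set in the intersecting family $\cW$, whence \PII's first $p'q'$ points, being disjoint from a member of $\cW$, are forced to be a line.

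For even $n$ the gap is larger still: your proposal contains no strategy at all, only the (correct) list of constraints that a construction would have to satisfy, together with the remark that this is ``the delicate part.'' This delicate part is the heart of the paper. For $n=2b$ with $b$ odd the paper builds an explicit intersecting family of $n/2$-sets (pairs plus a parity condition) and a strategy in which \PI maintains positions of the form $(A\cup\{(x,y)\},\bar A)$, exploiting \PII's forced last move; and for $n=2^a b$ the argument requires a genuinely new ingredient --- Lemma~\ref{l:key-lemma}, a technical result on lexicographically maximal rotations of partial cyclic pair sets in $\Z_{2^a}$ --- which lets \PI control, bin by bin, the sum of the maximal points and so steer the final position into the winning family. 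Nothing in your outline anticipates this mechanism or any substitute for it, so the proposal as it stands establishes only the framework around the theorem, not the theorem itself.
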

The case when $n$ is divisible by a large power of 2 is much harder
than the case when $n$ is odd, or $n$ is equal to $2 \mod 4$. It relies on a
careful study of subsets of $\Z_{2^m}$ (the integers modulo $2^m$) under rotation. This may be of
independent interest.

There are two cases not covered by Theorem~\ref{t:all-n}: powers of
2, and primes. For powers of 2 we have the following result.
\begin{theorem}
  Suppose that $n$ is a power of 2. Then no transitive avoidance game
  on $n$ points is a Player I win.
\end{theorem}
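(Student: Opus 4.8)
The plan is to give Player~II an explicit \emph{pairing} (mirroring) strategy built from a symmetry of the game, and the whole argument hinges on producing the right symmetry. Write $n=2^k$ and let $G=\mathrm{Aut}(X,\losing)$ be the automorphism group, which by hypothesis acts transitively on the board $X$. Suppose $\sigma\in G$ is a \emph{fixed-point-free involution}, so that $\sigma$ partitions $X$ into $2^{k-1}$ pairs $\{x,\sigma(x)\}$ and preserves the family $\losing$ of lines. Player~II will simply answer each move $p$ of Player~I by claiming its partner $\sigma(p)$. The crux of the proof, and the step I expect to be the main obstacle, is therefore the following purely group-theoretic claim: \emph{every transitive permutation group of degree $2^k$ (with $k\ge 1$) contains a fixed-point-free involution}. (The case $n=1$ is trivial: Player~I is forced to take the only point, so the game is never a Player~I win.)

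To prove this claim I would pass to a Sylow $2$-subgroup $P\le G$ and first show that $P$ is already transitive on $X$. Let $2^b=|G|_2$, so $|P|=2^b$; by orbit--stabiliser $|G|=2^k|G_x|$, whence $|G_y|_2=2^{\,b-k}$ for every point $y$. A $P$-orbit through $y$ has size $|P|/|P\cap G_y|$, and since $P\cap G_y$ is a $2$-subgroup of $G_y$ we get $|P\cap G_y|\le 2^{\,b-k}$, so each orbit has size at least $2^{b}/2^{\,b-k}=2^{k}=|X|$. Hence $P$ has a single orbit and is transitive. Now $P$ is a nontrivial $2$-group, so its centre contains an element $z$ of order $2$; because $z$ is central, $\mathrm{Fix}(z)=\{x:zx=x\}$ is a $P$-invariant subset of $X$. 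As $P$ is transitive this set is either empty or all of $X$, and since $z\ne \mathrm{id}$ it must be empty. Thus $z$ is the desired fixed-point-free involution.

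Given $\sigma=z$, I would verify the mirroring strategy works. Since $n$ is even and the claimed points after each of Player~II's moves form exactly $A\cup\sigma(A)$, where $A=\{p_1,\dots,p_j\}$ is Player~I's set, the partner $\sigma(p_{j+1})$ of any fresh move $p_{j+1}$ is always unclaimed, so the strategy is well-defined throughout. Suppose, for contradiction, that Player~II is the \emph{first} to complete a line, say by playing $\sigma(p_{j+1})$ and thereby occupying a line $\ell$. Then $\ell\subseteq\sigma(\{p_1,\dots,p_{j+1}\})$, so $\sigma(\ell)\subseteq\{p_1,\dots,p_{j+1}\}$; and $\sigma(\ell)$ is again a line because $\sigma$ preserves $\losing$. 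But $\{p_1,\dots,p_{j+1}\}$ is exactly Player~I's set after the move $p_{j+1}$, so Player~I has already occupied the line $\sigma(\ell)$ and hence completed a line no later than turn $2j+1$, contradicting the assumption that Player~II completed one first. (One checks that $\ell$ cannot be $\sigma$-invariant, since within each claimed pair one point is Player~I's and the other Player~II's, so a $\sigma$-invariant line is always split between the players and never monochromatic.)

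It follows that Player~II, playing the pairing strategy, is never the first to complete a line; either Player~I completes one first or the game is drawn, and in neither case does Player~I win. Hence no transitive avoidance game on a power-of-$2$ board is a Player~I win. The delicate point throughout is the existence of the fixed-point-free involution: the game-theoretic half is a standard strategy-stealing-by-pairing argument, but it only gets off the ground once transitivity of the Sylow $2$-subgroup is established, which is exactly where the hypothesis that $n$ is a \emph{power} of $2$ (rather than merely even) is used.
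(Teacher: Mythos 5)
Your proof is correct and takes essentially the same approach as the paper: Player~II pairs up the board via a fixed-point-free involution in the automorphism group and mirrors Player~I's moves, so that any line completed by Player~II has its image already completed by Player~I. The only difference is that the paper invokes the group-theoretic fact (that a transitive group of degree $2^a$ contains a fixed-point-free involution) as a standard cited result, whereas you supply its standard proof -- transitivity of a Sylow $2$-subgroup followed by taking a central involution -- which makes your write-up self-contained but not a different argument.
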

It turns out that this is a fairly simple consequence of a well known
group theoretic result.

For the remaining case of $n$ prime there are examples for which there
are Player I wins (e.g., 11 and 13), but we do not know what happens
for any prime larger than 13.

In many of our constructions Player II does not lose until his last
move. It is easy to modify these examples so that Player I can force
Player II to lose before time $(1-\eps)n$.  However, we also give
examples of games where Player~II loses in time $o(n)$. Interestingly
these even include examples where the losing lines are all of bounded
size (size 3 in fact).

We also consider avoidance games where on each turn each player is
allowed to pick more than one point (this is usually called the
\emph{plus} version of the game; see
Slany~\cite{DBLP:journals/eccc/ECCC-TR99-047}). One might think that
picking more than one point would never help a player, but it turns
out that in any transitive plus game Player II can use this extra
freedom to guarantee that he does not lose. A related phenomenom was
proved by Hefetz, Krivelevich, Stojakovi\'{c} and
Szab\'{o}~\cite{MR2557886} for a biased version of the
Avoider/Enforcer game where moving to the plus version of the game
simplified the behaviour substantially.

Our argument applies to some previously studied games. For example, it
gives a simple strategy stealing proof of the previously unknown fact
that the plus version of the Ramsey Avoidance Game (see
Section~\ref{s:strategy-stealing}) is never a Player I win.

\medskip In Section~\ref{s:strategy-stealing} we give two
situations where strategy stealing arguments do work: when there is a
line of size 2, and for the plus version of the games.  Then in
Section~\ref{s:odd} we give a simple construction of Player~I wins for
all composite odd~$n$.

Section~\ref{s:even} is the heart of the paper. In it we show that
there are Player~I wins for all even sizes, except powers of 2. 

In Section~\ref{s:simple} we discuss some natural variants of the game
such as lines with bounded size, games which are `more' transitive,
and games where Player I can win quickly.  We show that our
constructions so far (or simple modifications of them) are able to
provide first player wins in these cases too.

Then, in Section~\ref{s:torus} we give a natural game which satisfies
all of these stronger properties simultaneously -- it is a fast Player
I win, the lines have bounded size, and it is more transitive (more
precisely it is edge/line transitive -- see the section for the formal
definition).

We conclude with a discussion of some open problems.

\medskip We will use the following notation for games throughout the
paper. We will denote the board by $X$, the board size $|X|$ by $n$,
and the set of losing lines by $\cL$. We abbreviate Player~I and
Player~II to \PI and \PII respectively.  

\section{Games for which strategy stealing works}\label{s:strategy-stealing}
\subsection{A line of size 2}

If the family of losing lines $\losing$ has any set of size 2 then
there is a strategy stealing argument. Note that this includes the
case when all lines have size 2: i.e., where $\losing$ corresponds
to the edge set of some vertex transitive graph.
\begin{theorem}\label{t:graph}\label{t:size2=p2-win}
  A transitive avoidance game with any line of size 2 is not a \PI win.
\end{theorem}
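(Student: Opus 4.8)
The plan is to argue by contradiction: suppose \PI has a winning strategy $S$, and show that \PII can steal it to guarantee at least a draw. The role of transitivity is to make the opening move of $S$ coincide with a point of the size-$2$ line. Let $\{c,d\}\in\losing$ have size $2$, and let $p$ be the first move prescribed by $S$. Since the automorphism group of the game acts transitively, there is an automorphism $\psi$ with $\psi(p)=c$; conjugating $S$ by $\psi$ yields another winning strategy for \PI whose first move is $c$. Relabelling, I may therefore assume that $S$ opens with a point $a$ lying in a line $\{a,b\}\in\losing$ of size $2$. This alignment step is exactly what the trivial ``extra safe point'' example in the introduction shows we cannot omit.

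Next, I would have \PII steal $S$ while pretending to have already played the opening move $a$. Concretely, \PII runs an imaginary game in which he plays the role of \PI using $S$: his imaginary first move is $a$; each real move of \PI is fed into the imaginary game as the move of the imaginary second player, and \PII then plays in the real game whatever $S$ prescribes as the imaginary \PI's reply. Thus at every stage the set of points owned by \PII in the real game equals the set owned by the imaginary \PI \emph{minus} the single point $a$. Since $S$ is a winning strategy, the imaginary \PI never completes a line; and a subset of a set containing no whole line again contains no whole line. Hence \PII never completes a line in the real game, so the real game can only end in a draw or a \PI loss --- contradicting that $S$ wins for \PI, \emph{provided the stolen strategy is legal}.

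The step I expect to be the main obstacle is precisely this legality: I must check that the move $S$ prescribes is always unclaimed in the real game, and that every real move of \PI can be inserted into the imaginary game as the imaginary second player's move. Comparing claimed sets, the only point claimed in the imaginary game but not in the real one is $a$, so the sole difficulty arises when \PI actually plays $a$, since then the imaginary second player cannot also take $a$ (it already belongs to the imaginary \PI). This is where the pair $\{a,b\}$ is used: when \PI plays $a$ I substitute the move $b$ in the imaginary game. This substitution preserves the subset relation above (so \PII still completes no line) and keeps the imaginary play legal, \emph{unless} $b$ was already placed there by this same device --- which happens only if \PI has by now claimed both $a$ and $b$, i.e.\ \PI has completed the line $\{a,b\}$ and already lost, so \PII has won and no further move is needed. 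Carefully tracking the claimed sets through this substitution, and confirming \PII always has a legal reply, is the only real work; once it is done the contradiction is complete.
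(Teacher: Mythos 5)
Your overall plan --- steal a conjugated winning strategy and track an imaginary game that differs from the real one in a single point --- is the right family of argument, and your alignment step via transitivity is fine. But you have put the phantom point on the wrong side of the board, and this creates a genuine gap at exactly the step you flag. In your scheme the imaginary second player's set equals \PI's real set only until \PI plays $a$; after your substitution it is \PI's real set with $a$ replaced by $b$. Your case analysis checks that the substitution is \emph{legal}, but not that it keeps the imaginary game \emph{alive}: the substituted move $b$ (or a later fed move) may complete a line $L$ with $b\in L$ and $a\notin L$ inside the imaginary second player's set. At that moment the imaginary game is over --- $S$ is only guaranteed to supply moves, and to keep the imaginary \PI line-free, while its opponent has not yet lost --- whereas in the real game \PI holds $a$ rather than $b$, so \PI may have completed no line and the real game continues. \PII is then stranded with no further instructions, and your invariant ``\PII's real set is a subset of the imaginary \PI's set, which is line-free'' gives no protection for the rest of the play. (A smaller point: your claim that $b$ can only have entered the imaginary game through \PI needs the observation that $S$ never prescribes $b$, which holds because the imaginary \PI owns $a$ and $\{a,b\}\in\cL$; you use this implicitly.)

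The paper's proof avoids this by orienting the single discrepancy the other way. There, \PII waits for \PI's actual first move $x$, chooses $y$ with $\{x,y\}\in\cL$ (every point lies in a size-2 line, by transitivity), and steals a winning strategy $\Phi$ whose first move is $y$; \PII genuinely plays $y$, and the ignored phantom point is $x$, sitting on \PI's side. Then the imaginary opponent's set is always a \emph{subset} of \PI's real set, so the imaginary game can only end after the real game has already ended with \PI completing a line; and $\Phi$ never prescribes the unavailable point $x$, since that would complete $\{x,y\}$. No substitution device is needed and no stranding can occur. The fix for your write-up is exactly this re-orientation: align the stolen strategy's opening with the partner of \PI's actual first move, rather than with a pre-chosen point of the size-2 line.
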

\begin{proof}
Suppose (for contradiction) that the game is a \PI win. 

Suppose that \PI's first move is $x$. Since the game is transitive,
every point is in a line of size 2 so we can choose $y$ such
that $\{x,y\}\in \losing$.

Using the transitivity of the game again we see that there is a
winning strategy, $\Phi$ say, for \PI with first move $y$.

\PII now plays strategy $\Phi$ ignoring the fact that \PI has already
claimed~$x$. This could only go wrong if $\Phi$ tells \PII to
play~$x$. However, since $\{x,y\}$ is a line, and thus \PII would lose
anyway if he played $x$, this cannot happen.
\end{proof}
Note that we made crucial use of the existence of a line of size 2
in the `\PII would lose anyway' statement at the end of the proof.

We remark that a similar argument shows that in any transitive
avoidance game any strategy of \PI that promises never to play a
particular point is not winning. Thus, winning strategies for \PI must
be global; i.e., they must examine the entire board.

\subsection{The plus version}
As described in the introduction, the plus variant of an avoidance game is
the same as the avoidance game except that, on each move, a player may
choose to pick as many points as he likes rather than just one; in
other words, on each move, each player chooses a (non-empty) set of
points.

\begin{theorem}\label{t:plus}
  The plus version of a transitive avoidance game is not a \PI win.
\end{theorem}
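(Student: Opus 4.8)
The plan is to adapt the strategy-stealing argument from Theorem~\ref{t:graph} to the plus setting, where the key new feature is that each player may claim several points on a single move. The obstacle in the original theorem was that strategy stealing fails when \PI's forced first move $x$ is not safely absorbable; there we needed a line of size~2 to guarantee that \PII would lose anyway if told to play~$x$. In the plus version we can sidestep this difficulty entirely, because \PII is now permitted to pick more than one point at a time, and in particular can pick \emph{zero extra} points or can combine the `stolen' move with a genuine move.

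So I would argue as follows. Suppose for contradiction that the plus game is a \PI win, and let \PI's winning strategy begin with some move (a set of points) on the transitive board. By transitivity, for any point $x$ there is a winning strategy $\Phi$ for \PI whose first move includes (or equals) a move through $x$. The idea is to have \PII pretend to be \PI: \PII imagines that the point(s) \PI has actually just claimed were claimed by himself as part of his own (first) move, and then plays according to $\Phi$. The crucial gain over the size-2 case is that \PII, having the freedom to claim multiple points, can always legally fold \PI's actual claims into his own move, so he never faces the situation of being \emph{told} by $\Phi$ to claim a point that has already been taken --- if the strategy calls for a point \PII already `owns' in his simulation, he simply omits it, which is allowed since moves may be any non-empty sets and a single superfluous point can be dropped.

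The key steps, in order, are: (i) set up the correspondence between the real game and the simulated game in which \PII plays \PI's strategy $\Phi$, carefully tracking which points are claimed by whom in each game; (ii) verify that \PII's simulated moves are always legal plus-moves (non-empty, previously unclaimed) --- this is where the extra freedom of the plus version is used to absorb or discard the points \PI has genuinely taken; and (iii) observe that since $\Phi$ is a winning strategy for the player it guides, the player whom \PII is impersonating never completes a line, so in the real game it is \PI (the impersonated opponent) who is forced to complete a line first, contradicting the assumption that the game is a \PI win.

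The step I expect to be the main obstacle is (ii): making the bookkeeping of the simulation precise, so that at every stage \PII's intended move is genuinely available and non-empty, and so that the completion of a losing line in the simulated game corresponds exactly to a loss for \PI in the real game. One has to handle the parity/offset between the two games (\PII is effectively one move `behind' in the simulation) and confirm that the plus rule --- allowing a move to be any non-empty set of unclaimed points --- is exactly what is needed to keep the simulation valid without ever forcing \PII onto an unsafe point, which is precisely the gap that the size-2 hypothesis was plugging in Theorem~\ref{t:graph}.
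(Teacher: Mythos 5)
Your high-level instinct --- steal \PI's strategy via transitivity and use the freedom of multi-point moves to absorb the mismatch --- is the right one, but the specific mechanism you propose breaks down precisely at the step you flag as the expected obstacle, and the fix requires ideas you don't have. Your rule is: simulate a winning strategy $\Phi$, and whenever $\Phi$ prescribes a point already claimed by \PI (a point you have ``folded in''), simply omit it. The gap: nothing prevents $\Phi$ from prescribing a move \emph{all} of whose points are already claimed by \PI --- in particular points of \PI's actual first move $U$ that never entered your simulation. From $\Phi$'s point of view those points are unclaimed, so $\Phi$ is entirely free to send its player into that region; omitting them all leaves you with an empty move, which is illegal. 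There is a second, related problem with the ``folding'' itself: if your simulated set genuinely contains \PI's points, then the simulated player is no longer following $\Phi$ (his moves are strict supersets of the prescribed ones), and in an \emph{avoidance} game owning extra points can only hurt, so $\Phi$'s guarantee no longer applies to the enlarged set. Either way the simulation argument has a hole, and your step (iii) also needs care: a winning strategy does not guarantee its follower \emph{never} completes a line, only that the opponent completes one first.

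The paper closes this hole with two ideas absent from your proposal. First, because move sizes are arbitrary, a position $(S,T)$ (current player's points, other player's points) does not record whose turn it is; so from \PI's assumed winning strategy with first move $S$ one extracts the position-level fact that $(g(S),T)$ is a current-player win for every automorphism $g$ and \emph{every} non-empty $T$ disjoint from $g(S)$ --- crucially, $T$ may bundle several of the real opponent's moves into a single virtual response, which is exactly what resolves your unaddressed ``parity/offset'' worry. Second, a domination principle oriented the opposite way to your folding: \PII plays $S'\setminus U$, a \emph{subset} of the translate $S'=g(S)$ chosen to be non-empty, and the leftover points $U\setminus S'$, together with \PI's next move $W$, are attributed to the \emph{virtual opponent}, giving the comparison position $(S',(U\setminus S')\cup W)$. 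Since this position has the same set of unclaimed points as the real position $(S'\setminus U, U\cup W)$, every move prescribed by the virtual winning strategy is fully available in the real game --- no omission is ever needed --- while \PII's real set stays inside the virtual current player's set and the virtual opponent's set stays inside \PI's real set, so \PI completes a line first. Without the re-attribution of $U\setminus S'$ to the opponent (which kills the empty-move problem) and the compression of \PI's moves into one virtual response, your argument as written cannot be completed.
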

\begin{proof}Since the players can pick an arbitrary number of points
  we cannot tell which player's move it is just by looking at the
  current position, and we will make use of this fact in our
  proof. Thus, we introduce some notation for the position. A position
  is a pair of disjoint sets $(S,T)$ where $S$ denotes the set of 
  all points picked so far by the player whose turn it is, and $T$ the set
  of all points picked by the other player. From this position we will
  name the player whose move it is the \emph{current} player.

  Let $G$ be the automorphism group of the game.

  Suppose for the sake of a contradiction that \PI has a winning
  strategy and let $S$ be \PI's first (set) move in this
  strategy. After this move the position is $(\emptyset,S)$.  Since
  \PI is playing a winning strategy we know that position $(S,T)$ is a
  current player win for any non-empty set $T$. Thus, for any $g\in G$
  and any non-empty set $T$ disjoint from $g(S)$, the position
  $(g(S),T)$ is also a current player win.

  To obtain a contradiction we now show that the game is not in fact
  is a \PI win. Suppose that \PI plays some set $U$. If $U$ is the
  whole board then the game is over and \PII had definitely not
  lost. Thus, we may assume that \PI does not pick all the points.

  We have to give a strategy for \PII. Pick $x\not \in U$ and $g\in G$
  which maps some $s\in S$ to $x$. Let $S'=g(S)$.  \PII plays
  $S'\setminus U$, which contains $x$ so is non-empty.

  Now, suppose that \PI plays $W$. Then, it is  \PII's turn and the position
  is $(S'\setminus U,U\cup W)$. Since $S'\setminus U\subset S'$ and 
  \[(S'\setminus U)\cup (U\cup W)=S'\cup U\cup W=S'\cup ((U\setminus
  S')\cup W),\] we see that $(S'\setminus U,U\cup W)$ is no worse for
  the current player than $(S',(U\setminus S')\cup W)$. As noted
  above, this latter position is a current player win so $(S'\setminus
  U,U\cup W)$ is also a current player win; i.e., \PII wins as
  claimed.
\end{proof}
Some plus version avoidance games have been studied previously: in
particular, the Ramsey Avoidance Game $RAG(N,s)$ (see,
e.g.,~\cite{DBLP:journals/eccc/ECCC-TR99-047,MR1898254}).  This game
is played with board the edge set $E=E(K_N)$ and lines are the
$\binom{s}{2}$-subsets of $E$ corresponding to $K_s$ subgraphs for
some fixed $s$.

\begin{corollary}
  The plus version of Ramsey Avoidance Game $RAG(N,s)$ is not a \PI
  win. In particular, if $N\ge R(s,s)$ (where $R(s,s)$ denotes the
  Ramsey number) then it is a \PII win.
\end{corollary}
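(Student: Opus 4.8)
The plan is to derive this directly from Theorem~\ref{t:plus}, since the Ramsey Avoidance Game $RAG(N,s)$ is a transitive avoidance game and so its plus version falls immediately under the scope of that theorem. The only thing I need to verify is the transitivity hypothesis, and then the second sentence requires a short separate observation using the Ramsey number.

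First I would check that $RAG(N,s)$ is transitive in the required sense, namely that the automorphism group of the game acts transitively on the board $E = E(K_N)$. The automorphisms induced by permuting the $N$ vertices of $K_N$ act on the edge set, and since the symmetric group on the vertices acts transitively on the edges (any edge can be mapped to any other edge by a suitable vertex permutation), the board is transitive. Moreover these vertex permutations preserve the family of $K_s$-subgraphs, hence preserve $\losing$, so they are genuine automorphisms of the game. With transitivity established, Theorem~\ref{t:plus} applies verbatim and tells us that the plus version of $RAG(N,s)$ is not a \PI win.

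For the ``in particular'' clause, I would argue that when $N \ge R(s,s)$ the game cannot be a draw either, which combined with the first part forces it to be a \PII win. By the definition of the Ramsey number, any $2$-colouring of $E(K_N)$ contains a monochromatic $K_s$; in the game each point of the board ends up claimed by one of the two players (a $2$-colouring of $E$), so at the end of play some player must have completed a line. Hence the game cannot end in a draw, and since it is not a \PI win it must be a \PII win. I should note that in the plus version the players may leave the board not fully covered only if someone has already lost, so the no-draw conclusion still holds.

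I do not expect any serious obstacle here: the corollary is essentially a direct specialisation of Theorem~\ref{t:plus} together with the defining property of $R(s,s)$. The one point requiring a moment's care is making the transitivity check explicit, since Theorem~\ref{t:plus} is stated for transitive games and one must confirm that $RAG(N,s)$ qualifies; but this is immediate from the vertex-permutation action on $E(K_N)$.
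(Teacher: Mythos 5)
Your proof is correct and follows exactly the paper's own argument: transitivity of $RAG(N,s)$ (which the paper dismisses as obvious and you rightly verify via vertex permutations) feeds into Theorem~\ref{t:plus}, and the Ramsey condition $N\ge R(s,s)$ rules out a draw on a full board, forcing a \PII win. Your extra remark that an early loss is the only way the board can end up not fully covered is a sensible clarification but does not change the argument.
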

\begin{proof}
  The game is obviously transitive so Theorem~\ref{t:plus} implies that it
  is not a \PI win. In the case $N\ge R(s,s)$ the game cannot be a
  draw -- when the board is full (at least) one player must have a
  $K_s$ -- and so must be a \PII win.
\end{proof}

\section{First player wins for odd board sizes}\label{s:odd}
In this section we show that there are first player wins for all odd
composite board sizes. This is in contrast to the special cases 
discussed in the previous section.
\begin{theorem}\label{t:n-composite-odd}
  Suppose that $n=pq$ is odd. Then there is a transitive avoidance game on $[n]$ that is a \PI win.
\end{theorem}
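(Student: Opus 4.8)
The plan is to realise the game on a board carrying a rich transitive symmetry coming from the factorisation $n=pq$, and then to give \PI an explicit strategy that keeps several ``forcing threats'' alive at once, so that \PII is eventually compelled to complete a line while \PI never does. Concretely, I would take the board to be a transitive $G$-space with $G$ containing $\Z_p\times\Z_q$ (for instance $X=\Z_p\times\Z_q$ acted on by translations, which is transitive for any $p,q$, coprime or not), and let $\losing$ be a union of $G$-orbits of sets, so that the game is automatically transitive. Two constraints shape the choice of $\losing$: by Theorem~\ref{t:graph} every line must have size at least $3$, and very symmetric families (such as all the lines of an affine plane) must be avoided, since there the minority player can stay line-free while the majority player is forced into a line -- which is exactly the wrong way round. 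So the design must be asymmetric enough to bias the danger onto \PII, yet symmetric enough to remain transitive.

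Next I would set up \PI's strategy. By transitivity \PI's first move is irrelevant, so the real content is a response strategy on the remaining $n-1$ (even) points. The naive idea is a pairing/mirroring response, but this fails: any symmetric pairing makes \PI's and \PII's final sets mirror images of one another, so one contains a line exactly when the other does, and \PI's extra point only increases his \emph{own} danger. A second naive idea is for \PI simply to abandon one ``block'' $B$ (forcing \PII to own all of $B$, and hence to complete the line $B$) while spreading his own points thinly elsewhere; but \PII defeats this by overloading a different block, which forces \PI's remaining points to pile up and complete a line of his own first. The lesson is that \PI must keep \emph{many} interchangeable threats going simultaneously, and this is where compositeness is used: the factorisation $n=pq$ lets me manufacture $p$ (or $q$) symmetric potential forced losses for \PII, so that whenever \PII neutralises one threat \PI switches to another. \PI's strategy is therefore a threat-switching strategy, maintaining throughout the invariant that at least one block is ``committed'' to \PII while \PI's own position stays line-free.

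The main obstacle is the tempo and parity bookkeeping, which is exactly why the odd case, although simpler than the $2$-power-heavy even case, is still not trivial. Because $n$ is odd, \PII makes the last free choice, so it is not enough to know that \PII's final set must contain a line: I must show that \PII is forced to \emph{complete} a line on one of his own moves strictly before \PI is ever forced to complete one. I would prove this by exhibiting an explicit invariant preserved by \PI's responses, guaranteeing both that after each \PI move his own set is still line-free, and that the number of live threats against \PII strictly exceeds the number \PII can kill with his remaining moves; one then checks that the invariant can always be restored, so that \PI never gets stuck. The counting here is precisely where the factorisation does its work: the $p$ blocks of size $q$ (both odd and both at least $3$) make the move-counts balance so that \PII runs out of defusing moves one tempo before \PI does.

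Finally I would dispose of the degenerate small cases by hand -- for example $n=9$, and the case where a factor equals $3$ so that a block and a line coincide -- confirm directly that no line has size $2$ (so Theorem~\ref{t:graph} is not contradicted), and observe that the construction is uniform in $p$ and $q$, which is what makes it a single ``simple'' construction covering all odd composite $n$ simultaneously.
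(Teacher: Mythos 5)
There is a genuine gap here: you never actually construct the game. Your proposal fixes the board and its symmetry group, lists constraints that $\losing$ ought to satisfy, and then describes \PI's play only as a ``threat-switching strategy'' preserving an unspecified invariant whose verification is itself the whole difficulty. The paragraph claiming that ``the number of live threats against \PII strictly exceeds the number \PII can kill'' is an IOU, not an argument -- without a concrete family $\losing$ there is nothing to count, and without a concrete invariant there is nothing to check is preserved. In particular, your plan still treats the lines as smallish ``block-like'' sets that \PII is forced to complete, an idea you yourself observe fails in its naive form; nothing in the proposal explains what replaces it.

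The paper's proof rests on a single idea that is absent from your sketch: define the lines \emph{negatively}, as the complement of an intersecting family inside one uniformity level. Concretely, view $[n]$ as $q$ buckets of size $p$, set $p'=(p+1)/2$ and $q'=(q+1)/2$, let $\cW$ be all sets consisting of exactly $p'$ points in each of exactly $q'$ buckets (``a majority of a majority of buckets''), and let $\losing$ be all \emph{other} sets of size $p'q'$. Transitivity is immediate (permute buckets, permute within buckets). Since any two members of $\cW$ intersect, if \PI's first $p'q'$ points form a member of $\cW$, then \PII's first $p'q'$ points -- being disjoint from \PI's -- cannot lie in $\cW$, hence form a line, and \PII loses; all lines have size exactly $p'q'$, so \PI cannot have lost earlier. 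This converts the avoidance game into a Maker-type goal for \PI, achieved by three simple rules (answer \PII in the same active bucket; otherwise open a new bucket while fewer than $q'$ are active or full; otherwise play in an active bucket). Note that this construction also dissolves your tempo and parity worries: the game is decided by move $2p'q'<n$, so who moves last is irrelevant, and no small-case analysis (e.g.\ $p=3$) is needed. The intersecting-family mechanism is precisely the missing engine your ``threat counting'' was meant to supply.
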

\begin{proof}
  First we define the game. View the board, $[n]$, as $q$ sets
  $A_1,A_2,\dots,A_q$ of $p$ points. Let $p'=(p+1)/2$ and
  $q'=(q+1)/2$.  We call each $A_i$ a \emph{bucket}.

  Let $\cW$ be all subsets of $[n]$ of size $p'q'$ consisting of $p'$
  from each of $q'$ buckets.  Define the lines
  $\losing=[n]^{(p'q')}\setminus \cW$.

  We claim that the avoidance game on $(n,\losing)$ is transitive and
  a \PI win. The transitivity is trivial since we can permute the
  bins, and permute the points in any bin. Thus we just need to prove
  that it is a \PI win.

  Since, every pair of sets in $\cW$ meet, we see that if \PI can form
  a set in~$\cW$ with his first $p'q'$ points then \PII's first $p'q'$
  points must form a set not in $\cW$, i.e., these points must form a
  line in $\losing$.  Thus, it suffices to show that \PI can guarantee
  to form a set in $\cW$ with his first $p'q'$ points.

  We call a bucket \emph{active} if \PI has played at least one point
  in it but not yet $p'$ points in it. We say it is \emph{full} if he
  has played $p'$ points in it.  \PI's strategy is to play according
  to first of the following rules that applies.
  \begin{enumerate}
    \item If \PII has just played in an active bucket then \PI plays in the same bucket.
    \item If less than $q'$ buckets are either full or active
    then \PI plays in an empty bucket.
    \item \PI plays in any active bucket.
  \end{enumerate}
  Rule 1 implies that, after his turn, \PI always has strictly more
  points than \PII in any active bucket. Rule 2 implies that, after
  \PI's turn, strictly more than half the non-empty buckets are active
  or full. Thus after $p'q'$ moves \PI has exactly $p'$ points in
  exactly $q'$ buckets.
\end{proof}

\section{First player wins for even board sizes}\label{s:even}
\subsection{Isbell Families}
We saw in the introduction that one advantage for the first player is
that he goes first and can pick a `special' point, but obviously this
is not possible for a transitive game. A second possible advantage for
the first player occurs if the board has an even number of points: the
second player has no choice on his last turn, whereas the first player
does always have a choice. We use this `forced move' to construct
examples of games with even size boards that are first player wins.

It turns out that our avoidance game is closely related to an existing
game idea, namely that of a \emph{fair game}, from a very different
context (see Isbell~\cite{MR0093739} and~\cite{MR0118541}). To avoid confusion with other notions of
fair game we will call the families involved \emph{Isbell families}.
\begin{defn}
  An \emph{Isbell family} on a set $[n]$ is a family $\cF$ of subsets of
  $[n]$ such that $\cF$ is an up-set containing exactly one of $X$ and
  $[n]\setminus X$ for each set $X$, and having a transitive automorphism group.
\end{defn}
We remark that an Isbell family must be \emph{intersecting}, that is
any two sets in the family meet.

\begin{proposition}\label{p:fair-games}
  Suppose that $n$ is even and that there exists an Isbell family
  on~$[n]$.  Then there is a transitive avoidance game with board
  $[n]$ that is a first player win.
\end{proposition}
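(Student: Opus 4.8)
The plan is to turn the Isbell family $\cF$ directly into the losing family. Take the board to be $[n]$ and let the losing lines $\losing$ be the minimal members of $\cF$; equivalently, declare that a player has completed a line exactly when the set of points he has claimed lies in $\cF$ (the two descriptions agree because $\cF$ is an up-set). Since the automorphism group of $\cF$ acts transitively on $[n]$ and preserves $\cF$, and hence its minimal members, the resulting avoidance game is transitive. So the only real content is to show it is a \PI win.

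First I would reduce the winning condition to a statement about \PI's final set. As $n$ is even and the players alternate with \PI first, when the board is full \PI holds a set $P$ with $|P|=n/2$ and \PII holds its complement $Q=[n]\setminus P$. Because $\cF$ is an up-set containing exactly one of each complementary pair, exactly one of $P,Q$ lies in $\cF$; in particular the game is never a draw. I claim \PI wins if and only if $P\notin\cF$. Indeed, if $P\notin\cF$ then, $\cF$ being an up-set, every set \PI holds during the game (a subset of $P$) is also outside $\cF$, so \PI never completes a line; meanwhile $Q\in\cF$, so \PII does complete one, and \PII loses. The converse is identical with the roles of $P$ and $Q$ reversed. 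Thus it suffices to give \PI a strategy that forces his final set out of $\cF$; equivalently, writing $A$ for \PI's current set, a strategy that keeps $A\notin\cF$ after each of his moves.

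Here the source of \PI's advantage is the forced last move: because \PII moves last, whenever it is \PI's turn at least two points are unclaimed, so \PI is never compelled to take any particular single point and can always ``dodge''. The natural strategy is therefore for \PI to keep $A\notin\cF$, playing arbitrarily on his first (free) move and thereafter responding to each non-forced move of \PII. The main obstacle — and this is precisely the avoidance-form strategy-stealing difficulty flagged in the introduction — is that a purely greedy implementation can stall: one can reach a position with $A\notin\cF$ in which \emph{every} unclaimed point $p$ satisfies $A\cup\{p\}\in\cF$, so that \PI's next move is forced into $\cF$. To avoid this \PI must plan ahead rather than play greedily. One promising route is to pair each move $w$ of \PII with \PI's reply $r$ and induct on $n$: after these two points are removed, the family governing \PI's remaining choices is the link $\{Z:Z\cup\{r\}\in\cF\}$ restricted to subsets avoiding $w$, and a short computation shows this family is again an up-set with exactly one of each complementary pair precisely when $Z\cup\{r\}\in\cF \Leftrightarrow Z\cup\{w\}\in\cF$ for all $Z$ disjoint from $\{r,w\}$ (the small even-board base cases being trivial). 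The crux, which I expect to be the heart of the argument, is to show that \PI can always select a reply maintaining this self-duality — or, failing a clean induction, to exhibit a global invariant playing the same role — using the transitivity of $\cF$ together with the evenness of $n$. The endgame is the sharpest instance: at \PI's last move, with two points $\{a,b\}$ remaining, \PI must have arranged that not both of $A\cup\{a\}$ and $A\cup\{b\}$ lie in $\cF$, which is exactly the dodge made available to him by \PII's forced terminal move.
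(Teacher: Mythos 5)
Your setup is fine: taking the lines to be the minimal members of $\cF$ gives a transitive game, and your reduction is correct — since $\cF$ is an up-set containing exactly one of each complementary pair, \PI wins precisely when his final $n/2$-set $P$ lies outside $\cF$, because then no subset of $P$ ever enters $\cF$ while \PII's complementary set must. This matches the paper's reduction (the paper makes the inessential simplification of taking only the $n/2$-sized members of $\cF$ as lines, so that a line can only be completed on a final move). But the proposal then stops exactly where the proof has to start: you never establish that \PI can force his final set out of $\cF$, and you explicitly flag this as an unresolved ``crux''. That step is the entire content of the proposition, and the paper disposes of it with one observation you are missing: consider the \emph{achievement} game on $[n]$ whose winning sets are $\cW=[n]^{(n/2)}\setminus\cF$. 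By the Isbell property, for every $n/2$-set $X$ exactly one of $X$ and $[n]\setminus X$ lies in $\cF$, so when the board is full exactly one player occupies a set of $\cW$ — a draw is impossible. The classical strategy-stealing theorem for achievement games (they are never \PII wins) then gives \PI a strategy to occupy a set of $\cW$, i.e.\ to end with $P\notin\cF$; following that strategy verbatim in the avoidance game wins, by your own reduction.

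Your proposed substitute — induction on $n$ via the link $\{Z:Z\cup\{r\}\in\cF\}$ — is not developed and looks unlikely to go through as stated: the link of an Isbell family need not inherit transitivity, and the self-duality condition $Z\cup\{r\}\in\cF\Leftrightarrow Z\cup\{w\}\in\cF$ is a strong requirement that you give no mechanism for maintaining. Note also that the forced-last-move intuition you lean on is not where the leverage comes from; the leverage is that drawless achievement games are \PI wins, which is available here only because evenness makes ``final position'' mean ``complementary $n/2$-sets''. As it stands, the proposal is a correct reformulation of the problem, not a proof.
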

\begin{proof}
  Let $\cF$ be the sets in the Isbell family. We define the lines
  $\losing=\cF\cap [n]^{(n/2)}$. We show that this avoidance game is a \PI
  win. Obviously, this game is transitive.

  Consider the achievement game on board $[n]$ with winning lines
  $\cW=[n]^{(n/2)}\setminus \losing$. By the definition of an Isbell
  family every $n/2$ sized set is either in $\cW$ or its complement is
  in $\cW$. Hence, a draw is impossible in this achievement
  game. Thus, by the standard strategy stealing result, this
  achievement game must be a \PI win.

  \PI follows exactly the same strategy in the avoidance game. At the
  end his $n/2$ points form a set in $\cW$ so not in $\losing$ (so he
  has not lost) and \PII's $n/2$ points are the complement of \PI's
  set, so must form a set in $\losing$ and \PII has lost.
\end{proof}
It is not known for exactly which $n$ Isbell families exist but most
cases are known. In particular, Isbell~\cite{MR0093739} showed that
they do exist for $n=2b$ with $b>1$ odd; and Cameron, Frankl and
Kantor~\cite{MR988508} showed they do exist for $n=4b$ with $b>3$ and
odd. Thus, for all these cases we have transitive avoidance games
which are first player wins.  However, Cameron, Frankl and
Kantor~\cite{MR988508} also showed that Isbell families do not exist
for $n=2^a$, or for $n=3\times 2^a$ for $a\ge 2$, so we cannot use them to
prove all the remaining cases of Theorem~\ref{t:all-n}.

For concreteness let us describe the Isbell family, and
thus the \PI win avoidance game, on $6$ points.  We think of the six
points as being arranged in a  grid of two rows and three columns. The Isbell family
$\cF$ is the up-set generated by the family of all 3-sets that either
contain one point from each pair and an even number of points in the top
row, or contain both points in one pair and one point in the next pair
cyclicly. It is easy to check that this family is transitive and that every
set is either in $\cF$ or its complement is in $\cF$. Thus, $\cF$ is
indeed an Isbell family, and the avoidance game with lines
$[6]^{(3)}\cap \cF$ (i.e., the lines are exactly the generating sets
described above) is a \PI win. Indeed, this is easy to verify by hand.

In contrast, there do not exist first player wins when $n$ is a power
of 2. In order to prove this we start with a simple lemma.
\begin{lemma}\label{l:fpfi}
  Let $(n,\losing)$ be a transitive avoidance game and suppose that
  its automorphism group contains a fixed-point-free involution. Then
  the game is not a \PI win.
\end{lemma}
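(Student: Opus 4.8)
The plan is to give \PII an explicit \emph{pairing} strategy driven by the involution, so that \PII never completes a line before \PI does. Let $\sigma$ be the fixed-point-free involution in the automorphism group $G$. Since $\sigma^2=\mathrm{id}$ and $\sigma$ has no fixed points, every cycle of $\sigma$ has length exactly $2$, so the board decomposes into the pairs $\{x,\sigma(x)\}$; in particular $n$ is even. \PII's strategy is simply: whenever \PI plays a point $x$, answer by playing $\sigma(x)$.

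First I would check that this response is always legal. I claim that immediately before each of \PI's moves the set of played points is invariant under $\sigma$, i.e.\ a union of the pairs $\{x,\sigma(x)\}$. This holds initially for the empty set, and it is preserved, since after \PI plays $x$ and \PII answers $\sigma(x)$ the whole pair $\{x,\sigma(x)\}$ has been added. Consequently, when \PI plays an as-yet-unplayed point $x$, its partner $\sigma(x)$ is also unplayed (otherwise $\sigma(\sigma(x))=x$ would already have been played), and $\sigma(x)\neq x$; so \PII always has the valid new move $\sigma(x)$ available, and the board is exhausted exactly when it is \PI's turn with no move required of him.

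The key invariant is that after each of \PII's moves, \PII's set is exactly the image under $\sigma$ of \PI's set: if \PI holds $A$ then \PII holds $\sigma(A)$. Because $\sigma\in G$ is an automorphism it preserves $\losing$, so $\sigma(A)$ contains a line of $\losing$ if and only if $A$ does. Now if \PI never completes a line, then at no stage does \PI's set contain a line, whence by the invariant neither does \PII's, so \PII never loses. If instead \PI does complete a line at some move, then \PI loses at that very move, before \PII has replied, so again \PII has not lost first. Either way the game is not a \PI win.

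The one point to get right is the ordering of who completes a line first, and the symmetry supplied by $\sigma$ makes this transparent. Suppose \PI has just played $x$, forming $A'=A\cup\{x\}$, and that \PI has not yet lost, so $A'$ contains no line. Were \PII's reply $\sigma(x)$ to complete a line $L\in\losing$ inside $\sigma(A')$, then applying $\sigma$ (which is its own inverse) would give $\sigma(L)\subseteq A'$ with $\sigma(L)\in\losing$, contradicting that $A'$ contains no line. Hence \PII's pairing response can never be the move that loses the game. (Note that this argument uses only the existence of the fixed-point-free involution, not the full transitivity hypothesis.)
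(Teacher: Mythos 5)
Your proof is correct and is essentially the same as the paper's: both give \PII the pairing strategy ``respond to $x$ with its partner under the involution,'' and both conclude via the invariant that \PII's set is the automorphic image of \PI's set, so any line completed by \PII corresponds to a line \PI completed earlier. Your additional checks (legality of the response, and the remark that transitivity is not actually needed) are fine elaborations of the same argument.
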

\begin{proof}
  Let $G$ be the game's automorphism group and let $g\in G$ be a
  fixed-point-free involution.  Then $g$ partitions the board into
  pairs.  \PII's strategy is to play the point paired with \PI's
  previous move; i.e., if \PI plays $x$, then \PII plays $g(x)$.
  
  Suppose that, after \PII's move, \PII has played all the points in
  $T$. Then \PI must have played all the points in $g(T)$. Thus, if
  \PII's move was losing -- that is, if $T$ contains a losing set -- then
  \PI must have already lost.
\end{proof}
The following theorem is an immediate consequence.
\begin{theorem}
  A transitive avoidance game on a set of size~$2^a$ for
  some~$a$ is not a \PI win.
\end{theorem}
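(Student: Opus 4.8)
The plan is to deduce this directly from Lemma~\ref{l:fpfi}: it suffices to exhibit a fixed-point-free involution in the automorphism group $G$ of an arbitrary transitive game on $n=2^a$ points. Since $G$ acts transitively on a set of size $2^a$, the orbit--stabiliser theorem gives $2^a\mid |G|$, so a Sylow $2$-subgroup $P$ of $G$ has order a full power of $2$ which is at least $2^a$.

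The first substantive step is to show that $P$ \emph{itself} acts transitively on the board $X$. Writing $|G|=2^b m$ with $m$ odd and $b\ge a$, the stabiliser $G_x$ of a point $x$ has order $|G|/2^a=2^{b-a}m$, so its $2$-part is exactly $2^{b-a}$. Hence $P_x:=P\cap G_x$, being a $2$-subgroup of $G_x$, satisfies $|P_x|\le 2^{b-a}$, and the orbit of $x$ under $P$ has size $|P|/|P_x|\ge 2^b/2^{b-a}=2^a=|X|$. As this orbit is contained in $X$, it must equal $X$, so $P$ is transitive.

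Now $P$ is a finite $2$-group of order at least $2^a\ge 2$, hence nontrivial, so its centre is nontrivial and contains an involution $z$. I claim $z$ is fixed-point-free. Indeed, if $z$ fixed some point $x$, then for every $g\in P$ we would have $z(g(x))=g(z(x))=g(x)$, using that $z$ is central; since $P$ is transitive this forces $z$ to fix every point, contradicting $z\ne\mathrm{id}$. Thus $z\in P\le G$ is a fixed-point-free involution, and Lemma~\ref{l:fpfi} shows that the game is not a \PI win.

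The only real obstacle is the transitivity of the Sylow $2$-subgroup; once that is in hand, the central-involution argument is routine. This transitivity --- that a permutation group of prime-power degree $p^a$ has a transitive Sylow $p$-subgroup --- is the ``well known group theoretic result'' alluded to above, and the same argument in fact shows that any transitive game on $p^a$ points (for $p$ odd) has a fixed-point-free automorphism of order $p$, though that weaker conclusion is not what we need here.
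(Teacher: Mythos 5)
Your proposal is correct and takes essentially the same approach as the paper: both arguments reduce the theorem to Lemma~\ref{l:fpfi} by exhibiting a fixed-point-free involution in the automorphism group of an arbitrary transitive game on $2^a$ points. The only difference is that where the paper invokes this group-theoretic fact as a known result with a citation, you prove it from scratch --- the Sylow $2$-subgroup is transitive by your orbit--stabiliser computation, and a central involution of a transitive $2$-group must be fixed-point-free --- and both steps of that argument are sound, so your write-up is simply a self-contained version of the paper's proof.
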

\begin{proof}
  Let $G$ be the automorphism group of the game.  Then $G$ acts
  transitively on the board $X$ which has size $2^a$. By a standard
  result (see e.g.,~\cite{MR988508}) from group theory, it follows
  that $g$ has a fixed-point-free involution. Thus, by
  Lemma~\ref{l:fpfi} the game is not a \PI win.
\end{proof} 

We would like to extend Proposition~\ref{p:fair-games} to show that
there is a transitive avoidance game that is a \PI win for all even
sizes except powers of~2.  In the proof of
Proposition~\ref{p:fair-games} we relied on the existence of Isbell
families.  These were useful because they gave a transitive
intersecting family consisting of half of the $\frac{n}{2}$-subsets.
Since there do not exist Isbell families for sizes $3\times 2^a$, we
cannot use the same technique. Indeed, since the union of any such
intersecting family and the family of all subsets of size strictly
greater than $n/2$ is an Isbell family, such intersecting families
only exist when Isbell families exist.

Instead, we look for a smaller intersecting family $\cF$ for which \PI
can win the achievement game. Since $\cF$ is not a maximal
intersecting family it is possible that neither player forms a set
from $\cF$ (i.e., a draw is possible in the achievement game), so we
cannot use a strategy stealing argument to show that \PI has a winning
strategy for the achievement game. Thus, we need to define a winning
strategy explicitly.

\subsection{Special case: $n=2b$ with $b$ odd}
As our proof in the general case is rather involved, we start by
illustrating it in the simple case of $n=2b$ with $b$ odd; this case
is covered by Isbell's results on the existence of Isbell Families,
but we will give an example with a much smaller intersecting family
for all $n\ge 10$. (For $n=6$ the family is exactly the Isbell family
on six points described earlier.) We remark that we give a proof that
generalises to the full case, rather than the simplest proof for this
special case.

\begin{proposition}\label{p:2b}
  For any odd $b\ge3$ there is a transitive avoidance game of size~$2b$.
\end{proposition}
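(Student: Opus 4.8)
The plan is to reuse the mechanism of Proposition~\ref{p:fair-games}. I will produce a transitive, intersecting family $\cF$ of $b$-subsets of the board together with a strategy by which \PI can guarantee that his own $b$ points form a set of $\cF$, and I then set $\losing=[2b]^{(b)}\setminus\cF$. Because $\cF$ is intersecting, \PII's $b$ points (the complement of \PI's set) cannot lie in $\cF$, so they form a line, whereas \PI's set does not. Since every line has size $b=n/2$, no player can complete a line before holding $b$ points, so the earliest possible completion is at \PI's last move; there \PI's set lies in $\cF$ and is safe, and on the final (forced) move \PII is compelled to complete the complementary line and loses. The difference from Proposition~\ref{p:fair-games} is that $\cF$ will not be maximal, so the associated achievement game can be drawn and strategy stealing is unavailable; hence \PI's strategy must be exhibited explicitly.

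For the family I view the board as a $2\times b$ grid: $b$ columns (``pairs''), each with a top and a bottom point, the columns indexed cyclically by $\Z_b$. The basic sets of $\cF$ are the \emph{transversals} (one point from each column) having an even number of bottom points; as $b$ is odd, exactly one of any transversal and its complement is even, so these form a maximal intersecting family among transversals. For the automorphism group I take the cyclic rotation of the columns together with the involutions swapping top and bottom in an even number of columns: both preserve the even-transversal family (an even swap changes the bottom-count by an even amount), and together they act transitively, since rotations move any top to any top and any bottom to any bottom, while an even swap connects a top to the bottom of its own column. Thus this core family is already transitive and intersecting.

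The strategy for \PI is a pairing strategy on the columns: on his first move he opens a column, and thereafter, whenever \PII plays the first point of some column, \PI replies with the other point of that column, and whenever \PII completes the column that \PI had left half-open, \PI opens a fresh empty column. A short invariant check shows this is always possible and that after each of \PI's moves exactly one column is half-open (a lone \PI point), the rest being empty or split one--one; in particular \PI never holds both points of a column and \PII is never permitted to either. After \PI's final move every column is split except one half-open column, whose last point \PII is forced to take, so \PI ends with a genuine transversal.

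The hard part, and the step I expect to be the real obstacle, is controlling the parity of \PI's transversal, since on the columns where he merely replies his point is dictated by \PII. The key is the endgame: at his last move only two points remain and \PI chooses between them. If they lie in the same column he can freely set that column's row and fix the total parity to be even; if they lie in different columns he can instead take the second point of his half-open column, ending with a set that \emph{doubles} one column and \emph{omits} a neighbouring one. I therefore enlarge $\cF$ by exactly these doubled-and-omitted sets (a transversal on $b-2$ columns, both points of one column, no point of an adjacent column, with a suitable parity), so that in every endgame \PI can steer into $\cF$; for $b=3$ this recovers the six-point Isbell family described above. The crux is to show that this enlarged family is simultaneously intersecting (two doubled sets can fail to meet only when one doubles the column the other omits, which the cyclic-adjacency condition must exclude), transitive under the rotation-and-even-swap group, and exactly the family of endgame-reachable sets; verifying these three compatibilities together is where the work lies. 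I note finally that the existence claim also follows at once from Isbell's theorem for $n=2b$ via Proposition~\ref{p:fair-games}, but it is this explicit small family that is designed to generalise to the harder cases.
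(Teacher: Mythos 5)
Your framework is the same as the paper's (the $\Z_b\times\Z_2$ board, the even/odd-transversal core family, the rotation-plus-even-swap automorphism group, a pairing strategy with one half-open pair), but the step you defer as ``where the work lies'' is exactly the content of the paper's proof, and the patch you sketch for it fails. The paper's proof needs three interlocking ingredients that your proposal lacks. First, its doubled sets are not ``double a column, omit an \emph{adjacent} column'': they double pair $x$ and omit pair $z$ for any $z$ in the half-circle range $1\le z-x\le b'$, $b'=(b-1)/2$ (a one-sided range of this size is the largest that keeps the family intersecting). Second, \PI does not deviate only at his last move: whenever \PII opens a fresh pair $z$ with $1\le z-x\le b'$, \PI immediately takes a ``direct win'' by doubling his own half-open pair and refusing ever to complete pair $z$. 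Third, whenever \PI opens a fresh pair he opens the \emph{earliest} empty one, which maintains the invariant that every empty pair lies within cyclic distance $b'$ after the half-open pair; this is what forces \PII, in the endgame, to answer \PI inside the same pair, so that \PI retains the parity choice on the final pair. (Incidentally, your ``suitable parity'' condition on doubled sets would itself break transitivity: an even swap touching exactly one of the doubled and omitted columns flips the parity of the transversal part. The paper's doubled sets carry no parity condition.)

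Without these ingredients your strategy loses, so the gap is genuine and not just unfinished bookkeeping. Take $b=5$, and note that your doubled sets must use a one-sided adjacency convention (if ``double $0$, omit $1$'' and its complement ``double $1$, omit $0$'' are both allowed, the family is not intersecting); say the omitted column must be the one cyclically \emph{after} the doubled one. You open column $0$. \PII opens columns $1,2,3$ in turn (you reply opposite each time), then opens column $4$, choosing his point there so that the transversal you would complete has the wrong parity -- he can do this, since your points in columns $1,\dots,4$ are the opposites of his. At your last move the two free points are the second point of column $0$ and the second point of column $4$: completing the transversal gives the wrong parity, hence a line; doubling column $0$ gives a set omitting column $4=0-1$, the forbidden side, hence also a line. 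Either way you complete a line one move before \PII, and no choice of opening column or of adjacency convention escapes this, since \PII can always leave the last empty column on the forbidden side of your half-open column and tune the parity with his final opening. Your closing remark is correct that the bare statement of Proposition~\ref{p:2b} follows from Isbell's theorem together with Proposition~\ref{p:fair-games} (as the paper itself notes), but as a proof of the explicit small-family construction -- the thing your proposal is actually about, and what generalises to the cases where Isbell families do not exist -- the argument is missing its essential mechanism.
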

\begin{proof}
  Let $n=2b$ and $b'=(b-1)/2$. We think of $[n]$ as $\Z_b\times \Z_2$:
  i.e., as~$b$ pairs.  We start by defining some \emph{winning} sets
  $\cW$. They are all of size $b$, and are
\begin{enumerate}
  \item\label{e:2b-1} all sets which contain exactly one point from each pair and an
  odd number of which have a 1 in the $\Z_2$-coordinate,
  \item\label{e:2b-2} all sets with both elements of exactly one pair, that have the
  (unique) pair with neither element at most $b'$ later
  cyclicly.
\end{enumerate}
Obviously the complement of any set in this family is not in the
family and thus we see that this family is intersecting.  We define
the board of our avoidance game to be~$[n]$ and the lines~$\losing$
to be the complements of the sets in~$\cW$.  We claim that this
avoidance game is transitive and a \PI win.

To see that the game is transitive, it is enough to observe that its
automorphism group contains the following elements
\begin{itemize}
  \item cycle the $b$ pairs, 
  \item in two pairs swap the elements (i.e., swap~$(x,0)$
  and~$(x,1)$, and swap~$(y,0)$ and~$(y,1)$ for some $x$ and $y$).
\end{itemize}

Thus, to complete the proof, we just need to show that the game is a
\PI win. Indeed, if we can show that \PI can guarantee to make a set
in $\cW$, then we know that \PII will finish with a set in $\losing$,
and so will lose.

We will denote positions in the game as ordered pairs $(A,B)$ of
subsets of the board where this means \PI has played the points in $A$
and \PII has played the points in $B$. For a set $A$ containing at
most one of each pair, we write $\bar A$ for the `opposite' points:
that is the points in the other half of each pair that meets $A$.

First, define a position to be a \emph{direct win} if it is of the
form $(A\cup \{(x,y)\},\bar A\cup \{(z,w)\})$ for some $A$ and $1\le
z-x\le b'$, and observe that \PI has a simple winning strategy from
any such position. Indeed, he plays $(x,y+1)$ and for the rest of the
game makes sure he gets one of every other pair set except he never
plays the point $(z,w+1)$ opposite to $(z,w)$. This means that after
\PI's last turn he has one of every pair, except he has both of the
$x\th$ pair and neither of the $z\th$ pair. The constraint on $z$
means that this is a winning set under Condition~\ref{e:2b-2}
above. 

Our strategy for \PI is as follows. Unless the position is a direct
win, he will ensure that the position after his turn is of the form
$(A\cup \{(x,y)\},\bar A)$ for some set $A$ and point $(x,y)$.

Now from the position $(A\cup \{(x,y)\},\bar A)$ suppose that \PII plays a
point $(z,w)$. We split into several cases.
\begin{enumerate}
  \item\label{e:2b-win-1} If $1\le z-x\le b'$ then the position is a
  direct win and he wins by following the simple strategy given above.

  \item If $(z,w)$ is any point other than $(x,y+1)$ (i.e., unless
  \PII plays the opposite point to $(x,y)$) then \PI plays the point
  $(z,w+1)$ (i.e., opposite where \PII played). Obviously this is
  possible and keeps the position being of the form above, so \PI
  follows the same strategy from the new position.

  \item The last case is if \PII plays the point $(x,y+1)$, i.e., the
  opposite point to $(x,y)$. Thus, before \PI plays, the position is
  $(B,\bar B)$ where $B=A\cup\{(x,y)\}$: i.e., every pair is either
  full or empty.  Let $x'$ be the first empty pair. If $x'<b'$ then \PI
  just plays $(x',0)$ making the position of the required form and
  continues as above.

  (Note this case implies that \PI plays $(0,0)$ on his first go.)

  There is one remaining case: $x'\ge b'$. We deal with this below.
\end{enumerate}
To have reached this position all pairs $[0,b')$ must already be
filled. There may be some points in pairs in the interval $(b',b)$ but
the ${b'}\th$ pair must be empty as \PI did not play in it, and if \PII
had played in it then he would have lost under Case~\ref{e:2b-win-1}
above (as for the whole game so far the `extra point' in the position
was in a pair in the interval $[0,b')$).

For the remainder of the game \PI is going to play in a certain
fashion filling up the empty pairs $[b',b)$ in turn, except if the
position is a direct win, in which case \PI follows that strategy.
Since, when \PI plays in the empty pair $x$ all pairs $[0,x)$ have
already been filled, unless \PII plays the other point in pair $x$
then the position will be a direct win and \PI wins. Thus, we may
assume that \PII plays the other point in pair~$x$.

Hence, \PI gets to pick one point from each of the remaining empty
pairs, and \PII has to pick the other from each of them. In
particular, when \PI picks his point from the last empty pair he can
choose the correct element to ensure the correct parity under
Condition~\ref{e:2b-1}. Thus, \PI finishes with a set in $\cW$ as
required.
\end{proof}

\subsection{General even case}
Having seen this special case, we extend these ideas to all board
sizes of the form $n=2^ab$ with $b$ odd and greater than 1 (i.e., all
even board sizes except powers of 2). As above, we view this as $b$
copies of $2^a$; we call each copy of $2^a$ a \emph{bin}. In the
previous example there were very few possibilities for what happened
in one copy of $2^a$ (i.e. in a pair) but in the general case there
will be a lot more.  This makes the proof substantially more
difficult.

We need some definitions. Let $m=2^a$ and let $m'=m/4$.
\begin{defn}For $x\in[m]$ we define the \emph{opposite point} to $x$
  to be be the point $x+m/2$, and we call the pair $\{x,x+m/2\}$ an
  \emph{opposite pair}. We say a set $A\subset [m]$ is a \emph{cyclic
    pair set} if it contains exactly one of each opposite pair. We say
  it is a \emph{partial cyclic pair set} if it contains at most one of
  each opposite pair and is not empty. In a partial pair set we call
  any point where neither it nor its opposite point are in the set a
  \emph{free} point.
\end{defn}

The proof relies on a careful examination of the lexicographic order
on a set and its rotations and, in particular, its `maximum' rotate
defined as follows. 
\begin{defn}
  The lexicographic order on subsets of $[m]$ is defined as follows:
  $A\le B$ if the first point in the symmetric difference $A\Delta B$
  is in $A$.

  For any $r>1$, an \emph{$r$-maximal point} of a cyclic pair set
  $A$ is a point $x$ where the intersection of $A$ with the interval
  $\{x,x+1,x+2,\dots,x+r-1\}$ 
is maximal in lexicographic order over all the sets
  formed by intersecting  $A$ with an interval of length $r$ of
  $[m]$. We say $x$ is \emph{maximal} if it is an $m$-maximal point.
  We define \emph{$r$-minimal} and \emph{minimal} similarly.
\end{defn}
\begin{lemma}\label{l:unique-max}
  Any partial cyclic pair set in $[m]$ has a unique maximal point.
\end{lemma}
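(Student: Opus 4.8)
The plan is to prove uniqueness by a short rotational-symmetry argument; existence of a maximal point is automatic, since there are only finitely many rotates of $A$ to compare and a lexicographic order is total. The key observation is that comparing the length-$m$ interval patterns $A\cap\{x,x+1,\dots,x+m-1\}$ as $x$ ranges over $[m]$ is really comparing the cyclic rotates of the indicator sequence of $A$. Since any two distinct $0/1$-words of the same length that are not separated by the lexicographic order must in fact be identical, two \emph{distinct} maximal points would force two rotates of $A$ to coincide as words. This holds whichever way the order convention cuts, so I need not worry about the exact direction of the lexicographic comparison.

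First I would suppose, for contradiction, that $A$ has two distinct maximal points $x$ and $y$, and set $d=y-x$ in $\Z_m$ with $0<d<m$. Maximality of both means the rotate of $A$ starting at $x$ equals the rotate starting at $y$; reading off coordinates (the $i$-th coordinate of the rotate at $x$ records whether $x+i\in A$), this says $t\in A\iff t+d\in A$ for every $t$. In other words, $A$ is invariant under the shift by $d$ in $\Z_m$.

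Next I would exploit that $m=2^a$. Shift-invariance by $d$ makes $A$ a union of cosets of the subgroup $\langle d\rangle=\langle g\rangle$, where $g=\gcd(d,m)$. As $0<d<m$ and $m$ is a power of $2$, the divisor $g$ equals $2^j$ for some $j\le a-1$, and in particular $g$ divides $m/2=2^{a-1}$. Hence $A$ is also invariant under the shift by $m/2$: we have $t\in A\iff t+m/2\in A$ for all $t$.

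Finally I would bring in the pair structure. By definition a partial cyclic pair set contains at most one point of each opposite pair $\{t,t+m/2\}$, so $t$ and $t+m/2$ are never both in $A$. Combined with the equivalence $t\in A\iff t+m/2\in A$ just obtained, this forces neither point to lie in $A$, for every $t$; that is, $A=\emptyset$. This contradicts the non-emptiness built into the definition of a partial cyclic pair set, so the maximal point must be unique. The only real content of the argument is this last collision between a nontrivial period (which, because $m$ is a power of $2$, is automatically a divisor of $m/2$) and the anti-symmetry of the opposite-pair condition; pinning down that collision is the crux, and everything else is bookkeeping about rotates and lexicographic ties.
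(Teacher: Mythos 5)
Your proof is correct and is essentially the paper's own argument: both reduce uniqueness to showing that the rotation stabiliser of $A$ in $\Z_m$ is trivial, using that $m=2^a$ forces any nontrivial stabiliser to contain the rotation by $m/2$, which the opposite-pair condition rules out. Your version merely spells out the gcd/coset bookkeeping that the paper compresses into one group-theoretic sentence.
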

\begin{proof}
  Since the set contains exactly one of some opposite pair the set is
  not fixed by cycling by $m/2$, the order of the stabiliser is not
  divisible by $2$. Since the set has size $m=2^a$ this means the
  stabiliser has order 1; in particular, no two cyclic shifts of $A$
  are the same so there must be a unique maximal point.
\end{proof}

In the case when $a=1$ discussed above, i.e., for board sizes $2b$, it
was important that when \PI came to decide what happened in the final
empty pair he already knew what would happen in all the remaining
pairs. In that case that was trivial: they were already completely
filled. In the general case, when \PI decides what happens in the
final empty bin the remaining bins are non-empty but this does not
mean they are full. However, the following key lemma shows that \PI
has some control over what happens in these later bins.
\begin{lemma}\label{l:key-lemma}
  Suppose that $m$ and $m'$ are as above, and that $A$ is a partial
  pair set in $\Z_m$. Then there exist values $s,t$ and $z_1,z_2$ such
  that
  \begin{itemize}
    \item if all free points in $[z_1,z_1+m')$ are placed in $A$ then
    regardless of which of the remaining free points
    are placed in $A$ the maximum lies in $[t-s,t]$
    \item if all free points in $[z_2,z_2+m')$ are placed in $A$ then
    regardless of which of the remaining free points are placed in $A$ the
    maximum lies in $[t,t+2m'-s)$.
  \end{itemize}
\end{lemma}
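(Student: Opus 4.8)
The plan is to reduce everything to the behaviour of the \emph{longest absent run}, exploiting the fact that a cyclic pair set $B$ in $\Z_m$ is exactly complementary to its own half-shift: since $B$ contains precisely one of each opposite pair $\{x,x+m/2\}$, we have $\Z_m\setminus B=B+m/2$. Thus the absent points of $B$ are $B+m/2$, and a run of absent points starting at $x$ of length $\ell$ is the same thing as a present run of $B$ starting at $x-m/2$ of length $\ell$. Unwinding the lexicographic order (recall $A\le B$ when the first point of $A\Delta B$ lies in $A$, so that being \emph{absent} early makes a rotation \emph{larger}), the maximal point of a completion of $A$ is precisely the start of its longest absent run, with ties broken by continuing to prefer absent points; Lemma~\ref{l:unique-max} guarantees this start is unique.

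First I would fix the data that does not depend on the completion. The occupied pairs of $A$ (one side present, one side absent, forced in each case) already impose present and absent points that every completion must respect; these forced points cap the length of any absent run and, crucially, pin down where the \emph{guaranteed} longest absent run can begin. From this forced structure I would read off a base point $t$ and a length $s$: roughly, $t$ marks the start of the longest absent run that occurs no matter how the free pairs are resolved, and $s$ measures the length of the already-determined prefix immediately before $t$ over which the lexicographic comparison is still undecided. A short argument then shows that, for \emph{every} completion, the maximal point is confined to the half-circle $[t-s,\,t+2m'-s)$ of length $2m'=m/2$: an absent run starting outside this interval would have to beat the forced run near $t$, which the forced present points prevent.

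Next I would exhibit the two steering moves. Filling all free points of a quarter-interval into $A$ makes those points present and forces their opposites, a quarter-interval a half-turn away, to be absent; choosing the quarter appropriately manufactures a long absent block whose start lexicographically dominates every rival run that \PII can create with the remaining free pairs. Taking $z_1$ and the adjacent quarter $z_2=z_1+m'$ produces two such dominant absent blocks whose starting points differ by exactly $m'$: one lands the maximal point in the left window $[t-s,t]$ and the other in the right window $[t,t+2m'-s)$. The two windows abut at the seam $t$, which is exactly the ambiguous boundary between the two forced blocks. In each case, applying the confinement argument of the previous step \emph{after} the quarter has been committed shows that \PII's remaining free choices cannot push the maximum out of the stated window.

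The main obstacle I anticipate is the lexicographic tie-breaking. It is not enough to produce an absent run of maximal \emph{length}; I must ensure that the committed quarter's absent block strictly dominates, in the lexicographic order, every competing absent run that \PII could engineer elsewhere, including runs of equal length that \PII could try to seed using the still-free pairs. This is precisely what forces the parameter $s$ (the undecided prefix) into the statement, and explains why the two windows overlap only at the seam $t$ rather than tiling a clean half-circle. Getting the bookkeeping of these boundary cases right, so that committing $z_1$ or $z_2$ really does decide the comparison in \PI's favour, is where the work lies.
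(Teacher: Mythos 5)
Your outline shares two genuine features with the paper's argument---the steering move of committing every free point of a quarter-interval, and the fact that the two quarters end up adjacent with their windows meeting at a seam---but the claim your whole proof rests on is false. You assert that from the forced structure of $A$ alone one can read off $t$ and $s$ so that \emph{every} completion of $A$ has its maximal point in the half-circle $[t-s,t+2m'-s)$. Take $m=16$, $m'=4$, $A=\{0\}$, so the only forced points are $0$ (present) and $8$ (absent) and the other seven opposite pairs are free. Consider the three completions with present sets $\{0,1,\dots,7\}$, $\{0,9,10,\dots,15\}$ and $\{0,4,5,6,7,9,10,11\}$. Their maximal points are $0$, $9$ and $4$ (reading the order so that the maximal rotation begins with the longest occupied run, which is how the paper's proof uses it; under your opposite reading they are $8$, $1$ and $12$). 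In either convention the three maxima have cyclic gaps $4$, $5$ and $7$, so no arc of $2m'=8$ consecutive points contains all of them, and hence no choice of $t,s$ exists. The maximum of an uncommitted completion is genuinely global: it need not sit anywhere near a forced run of $A$ (here the longest forced absent run has length one), so your recipe for locating $t$ has nothing to bite on.

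What \emph{is} true---and is part~\ref{t:p:part-3} of Lemma~\ref{l:earliest-latest} in the paper---is that confinement holds only \emph{after} a quarter is committed: every full pair set extending $A+\one|_{[y,y+m')}$ has its maximum in an interval $[x',x]$ whose endpoints are certain $m'$-maximal points of $\amax$. But once confinement is only available per committed quarter, the window depends on which quarter was committed, and the substance of the lemma becomes showing that two adjacent quarters can be chosen whose windows share an endpoint $t$ and together cover only $2m'$ points. Your outline asserts this seam structure but gives no mechanism for it, and this gluing is exactly where the paper's work lies: normalising so that $0$ is the maximum of $\amax$, it introduces the four two-quarter completions $A^{k,k+1}$ with maxima $x_k$, proves each $x_k$ is $m'$-maximal in $\amax$ with $x_k\le km'$ and no free points in $[x_k,km')$, and then either finds $k$ with $x_{k+2}-x_k<2m'$ (whence $t=x_{k+1}$, $s=x_{k+1}-x_k$, and the quarters are $[(k+1)m',(k+2)m')$ and $[(k+2)m',(k+3)m')$), or falls into a degenerate case $x_2=x_0+2m'$, $x_3=x_1+2m'$ in which two opposite quarters are entirely free and committing one of them pins the maximum at $0$ exactly. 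Finally, the tie-breaking you defer as bookkeeping is where the pair-set and power-of-two structure do real work (uniqueness of the maximum, Lemma~\ref{l:unique-max}, and the non-periodicity argument of Lemma~\ref{l:not-min}); without it even the per-quarter confinement cannot be established. As it stands, the proposal fails at its first step and omits the chain argument that constitutes the heart of the proof.
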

We postpone the proof of this technical lemma to later. First, we show
that we can deduce the existence of transitive avoidance games of all
even sizes (except powers of 2) that are \PI wins from it. The
deduction is similar to the proof of Proposition~\ref{p:2b}.
\begin{theorem}
  Suppose that $n=2^ab$ for some odd $b>1$. Then there is a transitive
  avoidance game on $n$ points which is a Player~1 win.
\end{theorem}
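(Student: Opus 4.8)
The plan is to mimic the structure of the proof of Proposition~\ref{p:2b}, replacing each pair by a bin (a copy of $\Z_m$ where $m=2^a$), and to use Lemma~\ref{l:key-lemma} precisely where the $a=1$ argument used the trivial fact that a full bin's contribution is determined. First I would define the winning family $\cW$. By analogy with the two conditions in the proof of Proposition~\ref{p:2b}, I expect $\cW$ to consist of: sets which are a cyclic pair set in each of the $b$ bins, with some global parity condition across bins (generalising condition~\ref{e:2b-1}); together with sets that are a cyclic pair set in all bins but one, and in the exceptional bin contain both points of exactly one opposite pair and neither of another, where the ``missing'' pair's position relative to the ``doubled'' pair is controlled (generalising condition~\ref{e:2b-2}, now using the maximal point of Lemma~\ref{l:unique-max} to pin down the reference location within the bin). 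The intersecting property should follow because the complement of such a set fails the relevant parity/position constraint, just as before.

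Next I would verify transitivity: the automorphism group contains the cyclic rotation of the $b$ bins, rotation within each bin by $\Z_m$, and moves that swap opposite points in a controlled (even) number of coordinates so as to preserve the global parity condition. Since these act transitively on the $n=2^ab$ points, the game is transitive. As in Proposition~\ref{p:fair-games} and Proposition~\ref{p:2b}, it then suffices to give a strategy by which \PI guarantees ending with a set in $\cW$, since then \PII is forced into a line in $\losing$ and loses.

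The strategy for \PI follows the same skeleton as before. \PI maintains the invariant that after each of his moves, in every touched bin he holds a partial cyclic pair set with exactly one ``extra'' point, and he fills empty bins in cyclic order, responding to \PII's moves within a bin by playing the opposite point (keeping each bin balanced as a cyclic pair set) until the configuration forces a ``direct win'' analogous to Case~\ref{e:2b-win-1}. The crucial difference arises at the moment \PI reaches the last empty bin: unlike the $a=1$ case, the already-touched bins are not full, so \PI does not automatically know the maximal point of the eventual global configuration. This is exactly what Lemma~\ref{l:key-lemma} is designed to fix: it gives \PI two alternative ways to fill roughly half of the free points of a partial pair set so that, no matter how \PII completes the remaining free points, the maximal point is forced to lie in one of two prescribed (disjoint) intervals. \PI uses his move in the penultimate bin to select whichever interval yields the parity/position he needs, and then uses the forced play in the final bin (where \PII has no real choice on the last move, by the evenness of $m$) to land exactly in $\cW$ under the appropriate condition.

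The main obstacle I anticipate is bookkeeping the interaction between the \emph{global} parity condition (which couples all $b$ bins) and the \emph{local} maximal-point condition inside a single bin, and ensuring these remain simultaneously achievable throughout. In particular I must check that when \PI invokes Lemma~\ref{l:key-lemma} to steer the maximal point, he still has a free choice of parity to correct the global condition on his genuinely final move, and that the ``direct win'' positions really are winning under the new, more elaborate condition~\ref{e:2b-2}-analogue. Verifying that $\cW$ is genuinely intersecting once these two conditions are combined — so that \PII's complementary set is always a line — is the delicate point; everything else is a faithful, if more intricate, transcription of the Proposition~\ref{p:2b} argument.
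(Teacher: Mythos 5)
Your plan follows the paper's skeleton (generalise Proposition~\ref{p:2b} bin by bin, and invoke Lemma~\ref{l:key-lemma} at the stage where non-full bins prevent \PI from knowing the final configuration), but two ideas that carry the whole proof are missing. The first is the definition of the winning family itself: you leave the condition-\ref{e:2b-1} analogue as ``some global parity condition'' and instead attach the maximal-point machinery to the condition-\ref{e:2b-2} analogue, which is exactly backwards. In the paper, the doubled-pair/empty-pair sets are handled by directly comparing positions (conditions \ref{condition-2a} and \ref{condition-2b}), with no maximal points; the maximal points \emph{are} the generalisation of parity: a set splitting every opposite pair is winning iff the sum over the bins of the (unique, by Lemma~\ref{l:unique-max}) maximal points lies in $[0,m/2)$ mod $m$. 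Everything hinges on this particular choice. Complementation rotates each bin's contents by $m/2$, so it shifts the sum by $bm/2\equiv m/2 \pmod m$ (this is where $b$ odd enters), giving the intersecting property; the condition is preserved by cycling the bins and by rotating bin $i$ by $r_i$ subject to $\sum_i r_i=0$, giving transitivity; and it is precisely the quantity Lemma~\ref{l:key-lemma} lets \PI steer. Note also that your proposed automorphisms --- independent rotations within each bin --- cannot preserve any usable family: rotating every bin by $m/2$ maps a pair-splitting winning set to its complement, contradicting the intersecting property, and some pair-splitting sets must be winning, since otherwise \PII avoids losing by always playing the point opposite \PI's previous move.

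The second missing idea is the inductive bookkeeping in the endgame. A single appeal to Lemma~\ref{l:key-lemma} ``in the penultimate bin'', followed by forced play in the final bin, cannot work: after \PI reaches the last \emph{empty} bin there may be many bins which are non-empty but far from full, and each contributes an uncertainty of order $m'$ to the sum of maximal points, so the errors accumulate beyond what any one adjustment can absorb; nor is there a final parity-style correction as in Proposition~\ref{p:2b} (evenness is used only so that \PI completes his $n/2$ points first). The paper instead defines a running guess $G(j)=\sum_{i<j}u_i+\sum_{i\ge j}t_i$, where the $u_i$ are the already-determined maximal points and the $t_i$ are the reference values from Lemma~\ref{l:key-lemma} for the still-open bins. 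It initialises $G\in[0,2m')$ by a free choice of the starting point $u$ in the last empty bin, and then at \emph{every} subsequent bin $j$ makes a fresh binary choice between $z_1$ and $z_2$ (fill $[z_1,z_1+m')$ if $G(j)\ge s_j$, else $[z_2,z_2+m')$), so that $G(j+1)=G(j)+u_j-t_j$ stays in $[0,2m')$ whatever \PII does; when all bins are full, $G(b)$ equals the true sum and \PI's set satisfies the winning condition. Incidentally, the two intervals $[t-s,t]$ and $[t,t+2m'-s)$ of the lemma are not disjoint --- they share the endpoint $t$, and it is exactly because their union is a window of length about $2m'$ around $t$ that one of the two options always fits inside $[0,2m')$. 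Your intuition about where the difficulty lies and which lemma addresses it is correct, but the sum-of-maximal-points invariant and this induction are the actual content of the proof, and both are absent from your proposal.
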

\begin{proof}
  Let $m=2^a$, $m'=m/4$ and $b'=(b-1)/2$.  First, we define the winning
  sets $\cW$. These will all have size $n/2$ and the lines in our avoidance
  game will be the family of their complements. We view $n$ as being
  $\Z_b\times Z_m$.  For any point $(x,y)$ we define its opposite
  point to be the opposite point in the same bin: i.e., $(x,y+2m')$.

  The winning sets $\cW$ are all the sets
  \begin{enumerate}
    \item that contain exactly one of each opposite pair and the sum
    over all bins of the maximal points mod $m$ lies in the interval
    $[0,m/2)$.
    \item \label{condition-2}that contain both of exactly one opposite
    pair, say the pair $\{y,y+2m'\}$ in bin $j$, and the unique empty
    opposite pair is either
    \begin{enumerate}
      \item \label{condition-2a} in one of the bins between $j+1$ and $j+b'$,
      \item \label{condition-2b} in bin $j$ and of the form
      $\{z,z+2m'\}$ for some $y+1\le z\le y+m'-1$ (i.e., the empty pair is
      between 1 and $m'-1$ after the full pair in the same bin)
    \end{enumerate}
  \end{enumerate}
  This is an intersecting family; indeed, since all the sets have size
  $n/2$, we just need to check that the complement of any set in the
  family is not in the family. In the first case, this follows since
  the maximal point of the complement of a set containing exactly one
  of each pair is the maximal point of the set plus $m/2$, so the sum
  of the maximal points changes by $bm/2\equiv m/2$ modulo $m$. In the
  second case it is trivial.

  Also this family is transitive: indeed, the automorphism group
  contains the elements 
  \begin{itemize}
    \item cycle the $b$ bins
    \item rotate each bin $i$ by an amount $r_i$ with $\sum_i r_i=0$.
  \end{itemize}

  As in Proposition~\ref{p:2b} we will denote a position in the
  game by an ordered pair $(A,B)$ of subsets of the board, where $A$
  denotes the points played by \PI, and $B$ the points played by \PII.
  For any set $A$ containing at most one of each opposite pair we
  write $\bar A$ for the set of points opposite to $A$.  

  Also as in Proposition~\ref{p:2b}, some positions have simple direct
  wins. There are two types. A position of the form $(A\cup
  \{(x,y)\},\bar A\cup\{(z,w)\})$ is a \emph{direct win of type-1} if
  $1\le z-x\le b'$. It is a \emph{direct win of type-2} if $z=x$ and
  $0<w-y<m'$ or $2n'< w-y<3m'$. Let us see that both of these are
  indeed winning positions for \PI.

  If the position is a direct win of type-1 then \PI plays $(x,y+2m')$
  and for the rest of the game makes sure he gets one of every other
  pair set except he never plays the point $(z,w+2m')$ opposite to
  $(z,w)$. This means that after \PI's last turn he has one of every
  pair except he has both of a pair in the $x\th$ bin and neither of a
  pair in the $z\th$ bin. The constraint on~$z$ means that this is a
  winning set under Condition~\ref{condition-2a} above.

  If the position is a direct win of type-2 then \PI plays $x,y+2m'$
  and again makes sure he gets one of each opposite pair apart from he
  never plays $(z,w+2m')$.  This means that Maker finishes with a set
  that is winning under Condition~\ref{condition-2b} above.

  \PI's strategy is as follows. Unless the position is a direct win
  (of either type), he will make sure that the position after his turn
  is of the form $(A\cup \{(x,y)\},\bar A)$.  Now suppose that, from
  this position, \PII plays any point $(z,w)$.  We split into several
  cases
  \begin{enumerate}
    \item If $1\le z-x\le b'$ then the position is a direct win of
    type-1 and \PI wins.
    \item If $z=x$ and $0<w-y<m'$ or $2n'< w-y<3m'$ then the position
    is a direct win of type-2  and \PI wins.
    \item If $(z,w)$ is any point other than $(x,y+2m')$ (i.e., unless
    \PII plays the opposite point to $(x,y)$) then \PI plays the point
    $(z,w+2m')$ (i.e., opposite where \PII played). Obviously, this is
    possible and it keeps the position being of the form above, so \PI
    follows the same strategy from the new position.

    \item If \PII plays $(x,y+2m')$, the point opposite to
    $(x,y)$. Then, before \PI plays, the position is $(B,\bar B)$ where
    $B=A\cup \{(x,y)\}$.  Let $(x',y')$ be the empty point where $x'$
    is smallest and $y'$ is smallest amongst the empty points for that
    value of $x'$. If $x'<b'$ then \PI just plays $(x',y')$ making the
    position of the required form, and continues as above.

    (Note this case implies that \PI plays $(0,0)$ on his first go.)

    There is one remaining case: $x'\ge b'$. We deal with this below.
  \end{enumerate}
  
  There is one remaining case.  To have reached this position all
  points in bins $[0,b')$ must already be filled. There may be some
  points in bins $(b',b)$ but the bin $b'$ must be empty as \PI did
  not play in it and if \PII had played in it then he would have lost
  under Case 1 above (as for the whole game so far the `extra point'
  in the position was in bin $[0,b')$.

For the remainder of the game \PI is going to play in a certain
fashion filling up the bins $[b',b)$ in turn, except if a direct win
of either type occurs, in which case \PI follows the appropriate
winning strategy as described above.  Since, when \PI plays in a bin
$x$ all pairs $[0,x)$ have already been filled, unless \PII also plays
in the bin~$x$ the position will be a direct win and \PI wins. Thus,
we may assume that \PII plays in the same bin as \PI for the rest of
the game.

Observe that, when \PI first plays in any of the bins $[b',b)$, he can
pick a point $u$ and guarantee to play all the empty points in
$[u,u+m')$. Indeed, he starts by playing the first empty point after
$u$.  Then he follows his normal `pick the opposite point' strategy
(i.e., Case 3 of the strategy above) but whenever he gets a free
choice (i.e., Case 4) he picks the next free point in $[u,u+m')$. By
doing this \PI does get all the free points in $[u,u+m')$ since, if
\PII ever takes any of the points in $[u,u+m')$ or $[u+2m',u+3m')$,
then \PI wins directly under Case~2 above.

Let $r$ be the final empty bin. \PI plays
arbitrarily in the bins $[b',r)$, just ensuring that he gets one of
each pair. (For example he could guarantee to play all the points in
$[0,m')$ as described above.)

Now \PI is about to play in bin $r$. We define a key quantity which
\PI will try and control for the rest of the game; this quantity will
be defined for all positions where, for some $j>r$, bins $[0,j)$ are
full but bin $j$ is not.  In such a position, the maximal points $u_i$
for bins $[0,j)$ have all been determined and, moreover, all the
remaining bins are non-empty. Thus, for $j\le i<b$, there exist
$s_i,t_i$ as given by by Lemma~\ref{l:key-lemma}. We will think of the
number
\[G(j)=\sum_{i< j} u_i + \sum_{i\ge j} t_i
\]
as being the current `guess' at the sum of the maximum points. \PI's
strategy is to ensure that, for all $j>r$, this stays in the region
$[0,2m')$ as the bins fill up. We show that \PI can achieve this by
induction.

First, we show that he can play in bin $r$ to ensure that
$G(r+1)\in[0,2m')$. This is trivial: \PI picks a point
$u$ and, as above, guarantees to pick all the points in $[u,u+m')$.
It is easy to see that, however \PII plays, the maximum lies in
$[u-m',u]$. Thus, regardless of the game so far, by choosing $u$
correctly, \PI can ensure that $G(r+1)\in[0,2m')$.

Now suppose that $j>r$ and that $x=G(j)$.  Inductively, we know that
$x\in [0,2m')$, so one of $[x-s_j,x]$ and $[x,x+2m'-s_j)$ is a subset
of $[0,2m')$. Therefore, by the above observation, \PI can play all
the free points in $[z_1,z_1+m')$ or $[z_2,z_2+m')$ respectively,
where $z_1$, $z_2$ are as in Lemma~\ref{l:key-lemma}.  Then, however
\PII plays, $u_j\in[t_j-s_j,t_j]$ or $u_j\in[t_j,t_j+2m'-s_j)$ and
\[G(j+1)=G(j) +u_j-t_j\in [0,2m').
\]
Thus, $G(j+1)\in[0,2m')$ and the induction is complete.

When the process finishes the `guess' $G(b)$ equals the actual sum of
the maximum points and so this sum is in $[0,2m')$ and \PI wins.
\end{proof}

\subsection{Proof of  Lemma~\ref{l:key-lemma}}
In this section fix $m=2^a$ and let $m'=m/4$. We start with some
some notation.

Suppose that $A$ is a partial cyclic pair set in $\Z_m$. Then
$A|_{[x,x+r)}$ denotes the restriction of $A$ to the interval
$[x,x+r)$, rotated to be a subset of $[0,r)$.

We write $A+\one|_{x,x+r}$ for the union of $A$ and the set of all
free points in $[x,x+r)$. Note that this is not just $A\cup [x,x+r)$ as 
there may be points of $[x,x+r)$ that are opposite
to points in $A$ and these are not added; in particular,
$A+\one|_{x,x+r}$ is a partial cyclic pair set.

Finally we write $\amax$ for the set $A$ with all the free points
added. Note that $\amax$ is not a pair set (unless $A$ was already a
full pair set).

\medskip
Next we need some simple lemmas about maximal points.  We start with a
trivial observation that we use repeatedly.
\begin{lemma}
  Suppose that $A$ is a full pair set and that $x$ is $r$-maximal. Then
  $x+2m'$ is $r$-minimal.
\end{lemma}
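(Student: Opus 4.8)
The plan is to exploit the simple symmetry that, for a full pair set, shifting by the opposite amount $2m'=m/2$ turns the set into its complement. First I would record this structural fact: since $A$ contains exactly one point of each opposite pair $\{p,p+2m'\}$, we have $p\in A$ if and only if $p+2m'\notin A$; equivalently the shifted set $A+2m'$ is exactly $[m]\setminus A$ (this is the set $\bar A$ of the earlier proofs).

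Next I would compare the length-$r$ windows of $A$ based at $x$ and at $x+2m'$. Writing $A|_{[y,y+r)}$ for the window rotated into $[0,r)$, position $i$ of $A|_{[x+2m',x+2m'+r)}$ is occupied precisely when $x+2m'+i\in A$, which by the fact above happens exactly when $x+i\notin A$, i.e.\ when position $i$ of $A|_{[x,x+r)}$ is \emph{empty}. Thus $A|_{[x+2m',x+2m'+r)}$ is the bitwise complement within $[0,r)$ of $A|_{[x,x+r)}$. More generally the window based at $y+2m'$ is always the complement of the one based at $y$, so, since $y\mapsto y+2m'$ is a bijection of the index set, complementation maps the collection of all length-$r$ windows of $A$ bijectively to itself.

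The final ingredient is that complementation reverses the lexicographic order used here. Complementing two subsets of $[0,r)$ leaves their symmetric difference unchanged, so the first point of disagreement is the same, but it swaps which set contains that point; under the convention that $U\le V$ when the first point of $U\Delta V$ lies in $U$, this turns $U\le V$ into $([0,r)\setminus V)\le([0,r)\setminus U)$. Combining the ingredients: complementation is an order-reversing bijection of the window collection to itself, so it carries the lexicographically maximal window to the minimal one. Since $x$ is $r$-maximal, the window at $x$ is maximal, and hence the window at $x+2m'$ — being its complement — is minimal, i.e.\ $x+2m'$ is $r$-minimal. I do not expect a genuine obstacle; the only point needing care is to check the \emph{direction} of the order under the paper's convention, confirming that complementation reverses rather than preserves it.
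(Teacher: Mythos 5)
Your proof is correct and follows the same (essentially unique) route as the paper, which dismisses the statement as ``immediate from the definition of a cyclic pair set'': your chain --- shifting by $2m'$ turns $A$ into its complement, hence each length-$r$ window at $y+2m'$ is the bitwise complement of the window at $y$, and complementation reverses the paper's lexicographic order --- is exactly the detail behind that remark. Your check of the order's direction under the convention that $A\le B$ when the first point of $A\Delta B$ lies in $A$ is also handled correctly.
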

\begin{proof}
  Immediate from the definition of a cyclic pair set.
\end{proof}

\begin{lemma}
  Let $A$ be any subset of $[m]$. Then any $r$-maximal point is $r'$-maximal for all $r'\le r$.
\end{lemma}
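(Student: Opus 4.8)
The plan is to argue by contradiction, exploiting the fact that the lexicographic order compares two windows only at the \emph{first} point of their symmetric difference. Since the length-$r'$ window at a point is exactly the restriction of the length-$r$ window to $[0,r')$, truncating both windows to a common shorter length can only discard information about later positions, so maximality at length $r$ must descend to maximality at every shorter length.

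To set this up I would, for a starting point $y$, write $W_r(y)=A|_{[y,y+r)}$ for the length-$r$ window at $y$ viewed as a subset of $[0,r)$, so that $W_{r'}(y)=W_r(y)\cap[0,r')$ whenever $r'\le r$. Fixing an $r$-maximal point $x$, I would assume for contradiction that $x$ fails to be $r'$-maximal for some $r'\le r$. Because the lexicographic order is a total order on subsets of $[0,r')$, there is then a point $y$ with $W_{r'}(x)<W_{r'}(y)$ strictly. Let $p$ be the first point of the symmetric difference $W_{r'}(x)\,\Delta\,W_{r'}(y)$; by the definition of the order (the first differing point lies in the smaller set) we have $p\in W_{r'}(x)$ and $p\notin W_{r'}(y)$, while $W_{r'}(x)$ and $W_{r'}(y)$ agree at every position below $p$.

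The key step is then to compare the \emph{full} windows $W_r(x)$ and $W_r(y)$. Since $p<r'\le r$ and $W_{r'}(\cdot)$ is the restriction of $W_r(\cdot)$ to $[0,r')$, the two full windows agree at every position below $p$ as well, and at $p$ itself we have $p\in W_r(x)$ and $p\notin W_r(y)$. Hence $p$ is the first point of $W_r(x)\,\Delta\,W_r(y)$ and it lies in $W_r(x)$, so $W_r(x)<W_r(y)$ strictly. This contradicts the $r$-maximality of $x$, which requires $W_r(y)\le W_r(x)$ for every $y$, and the lemma follows.

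The only point requiring care is the bookkeeping of the order convention --- which set owns the first point of the symmetric difference --- together with the observation that this first point, being strictly below $r'$, is unaffected by whether we read off the length-$r'$ or the length-$r$ window. I expect no genuine obstacle: the content is simply that a lexicographic maximum among prefixes is inherited by the prefixes themselves, and since no property of cyclic pair sets enters, the argument applies to an arbitrary subset $A$ of $[m]$ exactly as stated.
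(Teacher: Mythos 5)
Your proof is correct and is exactly the argument the paper has in mind: the paper's own ``proof'' simply declares the statement trivial from properties of the lexicographic order, and your write-up is the straightforward verification of that claim. The key observation --- that the first point of the symmetric difference of the two length-$r'$ windows lies below $r'$ and is therefore also the first point of the symmetric difference of the length-$r$ windows, with the same owner --- is the right one, and you correctly track the paper's (slightly unusual) convention that the \emph{smaller} set contains the first point of the symmetric difference.
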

\begin{proof}
  This is trivial from properties of the lexicographic order.
\end{proof}
\begin{lemma}\label{l:max-r}
  Let $A$ be any subset of $[m]$ and suppose that $x$ is an $r$-maximal point and $x+r$ is an $r'$-maximal
  point. Then $x$ is an $(r+r')$-maximal point.
\end{lemma}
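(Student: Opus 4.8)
The plan is to exploit the fact that the lexicographic order is resolved by scanning coordinates from left to right, so that a window of length $r+r'$ factors as a window of length $r$ followed by a window of length $r'$. Concretely, for any starting point $y$ the restriction $A|_{[y,y+r+r')}$ is the concatenation of its initial block $A|_{[y,y+r)}$ on the first $r$ coordinates and its final block $A|_{[y+r,y+r+r')}$ on the last $r'$ coordinates (here I assume $r+r'\le m$, so that the window does not wrap onto itself, which is the only regime in which the statement is meaningful). First I would fix an arbitrary point $y$ and aim to show $A|_{[y,y+r+r')}\le A|_{[x,x+r+r')}$; since $y$ is arbitrary, this is exactly the assertion that $x$ is $(r+r')$-maximal.

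Next I would compare the two initial blocks $A|_{[y,y+r)}$ and $A|_{[x,x+r)}$. By the $r$-maximality of $x$ we have $A|_{[y,y+r)}\le A|_{[x,x+r)}$, and I would split into two cases according to whether this inequality is strict or an equality. If it is strict, then the first point of the symmetric difference of the two length-$(r+r')$ windows already occurs within the first $r$ coordinates, where the windows agree with their initial blocks; it therefore lands on the side of $x$, and the strict inequality is inherited by the full windows. If instead the initial blocks coincide, the first $r$ coordinates contribute nothing to the symmetric difference, so the comparison of the full windows reduces to a comparison of the final blocks $A|_{[y+r,y+r+r')}$ and $A|_{[x+r,x+r+r')}$. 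Here the $r'$-maximality of $x+r$ gives $A|_{[y+r,y+r+r')}\le A|_{[x+r,x+r+r')}$, and hence $A|_{[y,y+r+r')}\le A|_{[x,x+r+r')}$. In both cases the desired inequality holds.

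The only point needing genuine care is matching the orientation of the paper's lexicographic convention (``$A\le B$ if the first point of $A\Delta B$ lies in $A$'') to the left-to-right resolution used above: one must check that a difference appearing in an earlier coordinate always overrides any difference occurring later, so that a strict inequality on the initial blocks forces the same strict inequality on the full windows, while equality on the initial blocks passes the decision cleanly to the final blocks. I do not expect any real obstacle beyond this bookkeeping, together with the mild observation that $r+r'\le m$ is what allows the windows to decompose as honest concatenations.
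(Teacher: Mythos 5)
Your argument is correct, and it is essentially the argument the paper intends: the paper dismisses this lemma as ``trivial from properties of the lexicographic order'', and those properties are precisely the first-difference and concatenation facts you spell out, with the same two-case split (strict inequality on the initial block versus equality passing the decision to the final block). The one point to flag is your blanket assumption $r+r'\le m$, together with the claim that this is the only regime in which the statement is meaningful. That conflicts with how the lemma is actually used: in the proof of Corollary~\ref{c:max-r} (and again in Corollary~\ref{c:max-r-amax}) the paper invokes it with $r'=m$, concluding that a point is $(r+m)$-maximal and hence $m$-maximal, so windows of length exceeding $m$ must be allowed. The statement stays meaningful there if one reads $A|_{[y,y+r)}$ not as an honest subset of an interval but as the length-$r$ string obtained by reading the characteristic function of $A$ cyclically starting at $y$ (wrapping around, with repetition). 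Under that reading the window of length $r+r'$ at $y$ is still literally the concatenation of the window of length $r$ at $y$ and the window of length $r'$ at $y+r$, and your two-case comparison goes through verbatim, with no restriction on $r+r'$. So the correct move is to drop the hypothesis $r+r'\le m$ rather than to impose it; otherwise the lemma as you prove it would be too weak for the corollaries that depend on it.
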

\begin{proof}
  This is trivial from properties of the lexicographic order.
\end{proof}
\begin{corollary}\label{c:max-r}
  Suppose that $A$ is a partial cyclic pair set and $x$ is a maximal
  point. Then the point $x-r$ is not an $r$-maximal point.
\end{corollary}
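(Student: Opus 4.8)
The plan is to prove this by contradiction, reducing the statement to the uniqueness of the maximal point. Suppose, for a contradiction, that $x-r$ \emph{is} an $r$-maximal point, where we take $1<r<m$ so that $x-r\not\equiv x$ modulo $m$. My aim is to manufacture a second maximal point of $A$, which will contradict Lemma~\ref{l:unique-max}.

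The engine of the argument is the concatenation principle of Lemma~\ref{l:max-r}: if a point is $r$-maximal and the point $r$ steps later is $r'$-maximal, then the original point is $(r+r')$-maximal. I would apply this with first point $x-r$ and with $r'=m-r$, for which two inputs are needed. The first, that $x-r$ is $r$-maximal, is exactly the assumption made for contradiction. The second, that $(x-r)+r=x$ is $(m-r)$-maximal, I would obtain from the fact that $x$ is maximal, i.e.\ $m$-maximal, combined with the monotonicity lemma stated immediately before Lemma~\ref{l:max-r} (any $r$-maximal point is $r'$-maximal for every $r'\le r$); since $m-r\le m$, this shows $x$ is $(m-r)$-maximal.

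Feeding these two facts into Lemma~\ref{l:max-r} gives that $x-r$ is $\bigl(r+(m-r)\bigr)=m$-maximal, that is, $x-r$ is itself a maximal point of $A$. But $A$ is a partial cyclic pair set, so by Lemma~\ref{l:unique-max} it has a \emph{unique} maximal point; since $x-r\ne x$ modulo $m$ in the range chosen, this is the sought contradiction, and hence $x-r$ is not $r$-maximal.

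I do not anticipate any real obstacle: the whole statement is a two-line deduction from the two preceding lemmas, and nothing about $A$ beyond its being a partial cyclic pair set (needed only to invoke uniqueness in Lemma~\ref{l:unique-max}) enters the argument. The only care required is bookkeeping about the admissible range of $r$, so that both $r$-maximality and $(m-r)$-maximality are meaningful under the convention $r>1$ in the definition, and so that $x-r$ and $x$ are genuinely distinct points modulo $m$.
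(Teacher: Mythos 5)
Your proof is correct and is essentially the paper's own argument: both proofs assume for contradiction that $x-r$ is $r$-maximal, combine this with the maximality of $x$ via the concatenation principle of Lemma~\ref{l:max-r} to conclude that $x-r$ is itself a maximal point, and then contradict the uniqueness guaranteed by Lemma~\ref{l:unique-max}. The only cosmetic difference is bookkeeping: the paper concatenates $r$-maximality of $x-r$ with $m$-maximality of $x$ to get $(r+m)$-maximality and then drops back down to $m$-maximality, whereas you first drop $x$ to $(m-r)$-maximality and concatenate to land exactly at $m$.
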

\begin{proof}
  By the previous lemma $x-r$ would be an $(r+m)$-maximal point so an
  $m$-maximal point (i.e., a maximal point) contradicting the
  uniqueness of the maximum point (Lemma~\ref{l:unique-max}).
\end{proof}
\noindent%
The same holds for $\amax$ even though it is not a cyclic pair set.
\begin{corollary}\label{c:max-r-amax}
  Suppose that $A$ is a partial cyclic pair set and $x$ is a maximal
  point of $\amax$. Then the point $x-r$ is not an $r$-maximal point
  of $\amax$.
\end{corollary}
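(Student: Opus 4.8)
The plan is to follow the proof of Corollary~\ref{c:max-r} verbatim until it appeals to uniqueness of the maximal point, and then to supply a separate argument for that uniqueness in the setting of $\amax$. First I would argue by contradiction: suppose $x-r$ is an $r$-maximal point of $\amax$. Since $x$ is maximal it is in particular $m$-maximal, and Lemma~\ref{l:max-r} is stated for an arbitrary subset of $[m]$, so it applies to $\amax$; taking its ``$x$'' to be $x-r$ and its ``$r'$'' to be $m$ shows that $x-r$ is $(r+m)$-maximal, hence $m$-maximal, i.e.\ a maximal point of $\amax$. Since $0<r<m$ we have $x-r\neq x$, so $\amax$ would possess two distinct maximal points. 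The whole corollary therefore reduces to the claim that $\amax$ has a unique maximal point.

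This reduction step is routine; the main obstacle is the uniqueness claim itself, because $\amax$ is \emph{not} a cyclic pair set and so Lemma~\ref{l:unique-max} is not directly available. My proposed remedy is to show that every rotation fixing $\amax$ already fixes $A$, thereby transferring the uniqueness from $A$ (where the lemma does apply) to $\amax$. The key observation is that in $\amax$ each opposite pair is either a \emph{half} pair, met by $A$ and contributing only its $A$-point to $\amax$, or a \emph{full} pair, free in $A$ and contributing both of its points; no pair is empty, and since $A$ is nonempty at least one half pair exists. Because rotation commutes with the opposite map $y\mapsto y+2m'$, any rotation $g$ with $g(\amax)=\amax$ permutes opposite pairs and preserves $|P\cap\amax|$, so it carries half pairs to half pairs.

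Finally I would check that this forces $g(A)=A$: if $P$ is a half pair with $A$-point $p$, then $g(p)\in g(\amax)=\amax$ lies in the half pair $g(P)$, so $g(p)$ is the $A$-point of $g(P)$, and in particular $g(p)\in A$. Hence $g(A)\subseteq A$ and, as $g$ is a bijection of a finite set, $g(A)=A$. But $A$ is a partial cyclic pair set, so by the argument of Lemma~\ref{l:unique-max} its rotational stabiliser in $\Z_m$ is trivial; thus $g$ is the identity, $\amax$ has a unique maximal point, and the contradiction established above completes the proof. The part I expect to require the most care is verifying the ``half pair to half pair, $A$-point to $A$-point'' bookkeeping cleanly enough that $g(A)=A$ drops out, since this is exactly the place where the non-pair-set nature of $\amax$ is tamed.
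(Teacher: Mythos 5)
Your proposal is correct, and its first half is exactly the paper's: you apply Lemma~\ref{l:max-r} with $r'=m$ to upgrade the supposed $r$-maximality of $x-r$ to full maximality, reducing the whole corollary to the claim that $\amax$ has a unique maximal point. Where you genuinely diverge is in proving that uniqueness. The paper's route is a one-line re-inspection of Lemma~\ref{l:unique-max}: its proof never uses the full partial-pair-set hypothesis, only that the set contains exactly one point of \emph{some} opposite pair (so that rotation by $m/2$ does not fix it, forcing the stabiliser, a subgroup of $\Z_{2^a}$, to have odd order and hence be trivial). Since $\amax$ inherits exactly this property from $A$ — any pair met by $A$ contributes only its $A$-point to $\amax$, the opposite point not being free — the lemma's argument runs verbatim on $\amax$. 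You instead prove the stronger statement that the rotational stabiliser of $\amax$ is contained in that of $A$, via the half-pair/full-pair bookkeeping, and then kill the latter stabiliser by the argument of Lemma~\ref{l:unique-max}. Your transfer argument is sound: every point of $A$ is the unique $\amax$-point of its (half) pair, rotations commute with the opposite map and preserve pair counts, so a rotation fixing $\amax$ maps $A$ into $A$ and hence onto it. What each approach buys: the paper's is shorter but requires reopening the proof of Lemma~\ref{l:unique-max} to see which hypothesis it really needs; yours treats that lemma essentially as a black box about $A$ (uniqueness of the maximal point of $A$ is equivalent to triviality of its stabiliser, since a nontrivial fixing rotation would translate one maximal point to another), at the cost of the extra pair-structure analysis.
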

\begin{proof}
  Again, by Lemma~\ref{l:max-r}, $x-r$ would be an $(r+m)$-maximal
  point of $\amax$ so an $m$-maximal point of $\amax$. However, by a
  similar argument to Lemma~\ref{l:unique-max}, $\amax$ has a unique
  maximal point (the proof of that lemma just uses the fact that the
  set has exactly one point of some pair).
\end{proof}
\begin{lemma}\label{l:max-min}
  Suppose that $A$ is a (full) pair set, that $0$ is maximal, that $x$
  is $r$-minimal for some $x$ and $r$, and that $y$ is any $x$-maximal
  point. Then $y+x$ is $r$-minimal.
\end{lemma}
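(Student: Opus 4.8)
The plan is to reduce the claim to a single comparison of two windows of length $x+r$. First I would record the two facts the hypotheses give about length-$x$ windows. Since $0$ is ($m$-)maximal, the monotonicity observation that any $r$-maximal point is $r'$-maximal for every $r'\le r$ shows that $0$ is also $x$-maximal (note $x<m$). By hypothesis $y$ is $x$-maximal as well, and since the maximal \emph{value} of an $x$-window is unique, both $0$ and $y$ attain it; that is, $A|_{[0,x)}=A|_{[y,y+x)}$, the common maximal $x$-window, which I will call $W$.

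Next I would extend both of these windows by $r$ further coordinates and exploit the global maximality of $0$. Working in the natural range $x+r\le m$ (window lengths being at most $m$), maximality of $0$ together with the same monotonicity observation gives that $0$ is $(x+r)$-maximal, so $A|_{[0,x+r)}\ge A|_{[y,y+x+r)}$. Both of these length-$(x+r)$ windows begin with the same length-$x$ prefix $W$, so in the lexicographic order their comparison is decided entirely by their length-$r$ suffixes, namely $A|_{[x,x+r)}$ and $A|_{[y+x,y+x+r)}$. Hence $A|_{[x,x+r)}\ge A|_{[y+x,y+x+r)}$.

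Finally I would invoke the minimality of $x$: since $x$ is $r$-minimal, $A|_{[x,x+r)}$ is the least of all $r$-windows, so also $A|_{[x,x+r)}\le A|_{[y+x,y+x+r)}$. Combining the two inequalities forces $A|_{[y+x,y+x+r)}=A|_{[x,x+r)}$, so $y+x$ attains the minimal $r$-window and is therefore $r$-minimal.

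I expect the main subtlety to be careful bookkeeping with the lexicographic order, which in this paper is reversed from the usual one (a set is \emph{smaller} when it contains the earlier point of a disagreement); in particular one must check that ``decided by the suffix'' preserves the direction of the inequality when the prefixes coincide, and that ``maximal'' genuinely means the window $A|_{[0,\cdot)}$ dominates every other window. The one real hypothesis to watch is $x+r\le m$, needed so that extending the windows does not wrap around the circle; this holds in the range in which the lemma is applied, and for the fully general statement one would instead argue directly with the length-$m$ rotations at $0$ and at $y$ (which agree on their first $x$ coordinates and satisfy $A|_{[y,m)}\le A|_{[0,m)}$), reading off the same suffix comparison from their first point of disagreement.
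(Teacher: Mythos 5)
Your proof is correct and is essentially identical to the paper's: both compare the length-$(x+r)$ windows at $0$ and at $y$, use the equality of the common maximal $x$-prefix to reduce to the suffix comparison $A|_{[x,x+r)}\ge A|_{[y+x,y+x+r)}$, and then invoke $r$-minimality of $x$ to force equality. The extra care you take over the wrap-around issue and the reversed lexicographic convention is sound but not a departure from the paper's argument.
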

\begin{proof}
  Since $0$ and $y$ are both $x$-maximal we have that
  $A|_{[0,x)}=A|_{[y,y+x)}$ and, since $0$ is maximal, we have
  $A|_{[0,x+r)}\ge A|_{[y,y+x+r)}$. Combining these we get $A|_{[x,x+r)}\ge
  A|_{[y+x,y+x+r)}$. But since $x$ is $r$-minimal, we have
  $A|_{[x,x+r)}\le A|_{[y+x,y+x+r)}$ and hence
  $A|_{[x,x+r)}=A|_{[y+x,y+x+r)}$. Thus, $y+x$ is $r$-minimal.
\end{proof}
\begin{lemma}\label{l:not-min}
  Suppose that $A$ is a (full) pair set and that $0$ is maximal. Then, for
  all $r<2m'$, no point in $[0,r]$ is $r$-minimal. In particular, this
  holds for $r=m'$.
\end{lemma}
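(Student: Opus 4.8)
The plan is to reduce the statement to a single equality between windows, and then to exploit the fact that the maximal point is unique.

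First I would record the structural facts that make the two sides comparable. Since $A$ is a full pair set, shifting by $2m'$ carries $A$ to its opposite $\bar A$ and hence reverses the lexicographic order; together with Lemma~\ref{l:unique-max} this shows that $A$ has a \emph{unique} minimal point, namely $2m'$, and that the content of any minimal length-$r$ window is the content of the maximal one with the membership of each point interchanged. Moreover $0$ is maximal, so by the monotonicity of maximality (an $r$-maximal point is $r'$-maximal for every $r'\le r$) the maximal length-$r$ window is simply $A|_{[0,r)}$. Because every $r$-minimal point realises the same minimal window, a point $p$ is $r$-minimal if and only if $A|_{[p,p+r)}$ equals that minimal window; equivalently, shifting by $2m'$, if and only if $p+2m'$ is $r$-maximal with $A|_{[p+2m',\,p+2m'+r)}=A|_{[0,r)}$. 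In particular $p=0$ is never $r$-minimal, since the maximal and minimal windows are complementary and hence differ. So I may assume $p\in[1,r]$ and aim for a contradiction.

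Next I would use that we only range over $p\in[0,r]$, so $p\le r$ and the intervals $[0,r)$ and $[p,p+r)$ overlap in $[p,r)$. Reading off the two descriptions of the content on this overlap shows that the maximal window $A|_{[0,r)}$ is \emph{anti-periodic with period $p$}: its translate by $p$ equals its own complement wherever both are defined. This anti-periodicity is the engine of the argument, and it is precisely here that the structure of $p$ being ``early'' enters.

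I expect the argument to split according to the size of $p$. When $p\ge 2m'-r$ it finishes at once: setting $s=2m'-p\le r$ we have $p+2m'=-s$ in $\Z_m$, and Corollary~\ref{c:max-r} applied to the maximal point $0$ says $-s$ is not $s$-maximal; by the monotonicity of maximality it is therefore not $r$-maximal, contradicting the reformulation above. The hard part is the complementary range $1\le p<2m'-r$, where $r<2m'$ is genuinely needed and Corollary~\ref{c:max-r} is not directly strong enough. Here I would push the local anti-periodicity of the maximal window out to a global statement: the period-$p$ anti-periodicity, combined with the global anti-period $2m'$ of the pair set, forces $A$ to be periodic with period $2p$ along an initial arc, and the obstacle is to propagate this periodicity all the way round the circle. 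Controlling the propagation using that $0$ is the (unique) maximal point and that a full pair set has trivial stabiliser (the content of Lemma~\ref{l:unique-max}) should produce either a rotate lexicographically exceeding the one based at $0$, or an honest nontrivial period of $A$ — either of which is a contradiction. Lemma~\ref{l:max-min}, which transports $r$-minimal points by adding maximal points, and the additivity of Lemma~\ref{l:max-r} are the natural tools for organising this, and I would expect the delicate book-keeping of exactly how far the forced periodicity must reach, and why $r<2m'$ makes it inconsistent, to be where the real work lies.
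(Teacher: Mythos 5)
Your preliminary reductions and your easy cases are correct, and one of them is genuinely nice: the equivalence ``$p$ is $r$-minimal iff $p+2m'$ is $r$-maximal'', the disposal of $p=0$ (the maximal and minimal windows are complementary, hence distinct), and the range $p\ge 2m'-r$, where writing $s=2m'-p\in[1,r]$ makes $p+2m'=-s$ an $s$-maximal point, contradicting Corollary~\ref{c:max-r} applied to the maximal point $0$. That last step is a clean self-contained argument that the paper does not use.

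However, there is a genuine gap: the entire range $1\le p<2m'-r$ is not proved, only planned (``should produce'', ``I would expect \dots to be where the real work lies''). This is not a peripheral case --- it is the whole content of the lemma. For the value $r=m'$ that is actually invoked later (in Lemma~\ref{l:not-top}), your completed case covers only $p=m'$, and every $p\in[1,m')$ lies in the unproved range. Moreover, the route you sketch, propagating the anti-periodicity of the window $A|_{[0,r)}$ outward, does not by itself go anywhere: that identity lives on an arc of length $r<m/2$, which comes nowhere near determining $A$ globally, so no direct bookkeeping on that arc produces a nontrivial period of $A$. What closes the argument (and is essentially the paper's entire proof, with no case split needed) is an \emph{iteration} of Lemma~\ref{l:max-min}. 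Since $p\le r$, the point $p$ is $p$-minimal, so $p+2m'$ is $p$-maximal; Lemma~\ref{l:max-min} (applied with window length $p$) gives that $2p+2m'$ is $p$-minimal, hence $2p$ is $p$-maximal; applying it again gives $3p$ is $p$-minimal, and inductively $kp$ is $p$-maximal for even $k$ and $p$-minimal for odd $k$. Since all $p$-maximal windows are equal and all $p$-minimal windows are equal, the windows based at $0,p,2p,\dots$ alternate between two fixed patterns, so $A$ is periodic with period $2p\le 2r<m$, contradicting the triviality of the stabiliser established in (the proof of) Lemma~\ref{l:unique-max}. You correctly named Lemma~\ref{l:max-min} as the natural tool, but the iteration that converts it into global periodicity is exactly the missing work, and without it the lemma is not proved.
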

\begin{proof}
  Suppose that $x\in [0,r]$ is $r$-minimal. Since $r\ge x$
  Lemma~\ref{l:max-min} implies that, for any $y$ that is $x$-maximal,
  $y+x$ is $x$-minimal, and this is the form we shall use.

  Now $x$ is $r$-minimal, so $x+2m'$ is $r$-maximal and thus, since
  $x\le r$, also $x$-maximal. By Lemma~\ref{l:max-min}, $2x+2m'$ is
  $x$-minimal, and hence $2x$ is $x$-maximal. By Lemma~\ref{l:max-min}
  again, we have that $3x$ is $x$-minimal, so $3x+2m'$ is
  $x$-maximal. Repeating we see that $4x+2m'$ is $x$-minimal so $4x$
  is $x$-maximal etc. In particular $kx$ is $x$-maximal for even $k$
  and $x$-minimal for odd $k$.  Thus the pair set $A$ is periodic with
  period $2x\le2r<m$. Since a cyclic pair set cannot be periodic this
  is a contradiction.
\end{proof}
\begin{lemma}\label{l:not-top}
  Suppose that $A$ is (full) pair set. Suppose that $x$ is an $m'$-maximal
  point. Then the maximal point lies in $(x-2m',x]$.
\end{lemma}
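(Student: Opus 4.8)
The plan is to rephrase the statement entirely in terms of \emph{minimal} points and then split the work into two ranges. First I would invoke Lemma~\ref{l:unique-max} so that it makes sense to speak of \emph{the} maximal point, and rotate the board so that this point is $0$; all the lemmas I want to use are stated with $0$ maximal, so this normalisation is harmless. With $0$ fixed as the maximal point, the conclusion ``the maximal point lies in $(x-2m',x]$'' becomes $0\in(x-2m',x]$, which (since the cyclic interval $(x-2m',x]$ consists of the $2m'$ points $x-2m'+1,\dots,x$) is equivalent to $x\in[0,2m')$. So it suffices to show that no $m'$-maximal point lies in $[2m',m)$. Passing to opposites using that $A$ is a full pair set (so $x$ is $r$-maximal iff $x+2m'$ is $r$-minimal, by the trivial opposite lemma), and noting that $x\mapsto x+2m'$ carries $[2m',m)$ bijectively onto $[0,2m')$, the whole statement reduces to the single claim: \emph{no point of $[0,2m')$ is $m'$-minimal}.

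To establish this claim I would split $[0,2m')$ into $[0,m']$ and $(m',2m')$. The first range is handled immediately by Lemma~\ref{l:not-min} with $r=m'$, which says precisely that no point of $[0,m']$ is $m'$-minimal.

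For the second range, suppose for contradiction that some $p\in(m',2m')$ is $m'$-minimal, and set $e=2m'-p$, so that $e\in(0,m')$. Since $p$ is $m'$-minimal and $e\le m'$, the window-shrinking property shows that $p$ is also $e$-minimal, and hence (again by the opposite correspondence) its opposite $p+2m'$ is $e$-maximal. Now $p+2m'=(2m'-e)+2m'=4m'-e\equiv -e\pmod m$, using $m=4m'$. But $0$ is the maximal point, so Corollary~\ref{c:max-r} applied with radius $e$ asserts exactly that $0-e=-e$ is \emph{not} $e$-maximal. This is a contradiction, so no point of $(m',2m')$ is $m'$-minimal either. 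Combining the two ranges gives the claim, and unwinding the reductions yields the lemma.

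The genuine obstacle is the range $(m',2m')$: it is \emph{not} covered by Lemma~\ref{l:not-min}, whose periodicity argument requires the $m'$-minimal point to lie within distance $m'$ of the maximum, and for $p\in(m',2m')$ that hypothesis fails. The idea that unlocks it is to measure $p$ by its distance $e=2m'-p$ from the global \emph{minimum} $2m'$ rather than from the maximum; this distance is genuinely smaller than $m'$, which is exactly what lets one shrink the $m'$-minimal window down to an $e$-minimal one and then feed it into Corollary~\ref{c:max-r} through the opposite correspondence. The only routine care needed is the modular bookkeeping ($m=4m'$, $p+2m'\equiv -e$) and the (immediate) minimal analogues of the window-shrinking and opposite lemmas.
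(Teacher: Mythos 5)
Your proof is correct and is essentially the paper's own argument viewed through the opposite-point correspondence: your Range 1 is exactly the paper's case ``maximal point in $[x+m',x+2m']$'' (handled by Lemma~\ref{l:not-min} after passing to the opposite point), and your Range 2 --- once the two applications of the opposite correspondence cancel --- is exactly the paper's case ``maximal point in $[x+1,x+m']$'' (shrink the $m'$-maximal window at $x$ to an $e$-maximal one and contradict Corollary~\ref{c:max-r}). The only differences are presentational: you normalise the maximum to $0$ and dualise to minimal points at the outset, and in Range 2 you make a redundant round trip (minimal $\to$ $e$-minimal $\to$ opposite $e$-maximal) where the paper shrinks the maximal window directly.
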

\begin{proof}
  Let $y$ be the maximal point. 

  First, suppose that $y\in [x+1,x+m']$. Then, by
  Corollary~\ref{c:max-r}, $x$ is not $m'$-maximal, which is a
  contradiction.

  Now suppose that $y\in [x+m',x+2m']$. Then $x+2m'$ is $m'$-minimal
  but, since $x+2m'\in [y,y+m')$, this contradicts Lemma~\ref{l:not-min}.
\end{proof}

\begin{corollary}\label{c:least-max-point}
  Suppose that $A$ is a full pair set and that $0$ is $m'$-maximal. Then
  the actual maximal point is the first point in the set
  $(2m',4m']$ that is $m'$-maximal.
\end{corollary}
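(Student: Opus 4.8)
The plan is to deduce everything from Lemma~\ref{l:not-top}, reread as a statement that locates \emph{all} $m'$-maximal points relative to the true maximum. Write $y$ for the maximal point, which is unique by Lemma~\ref{l:unique-max}. Since a maximal (that is, $m$-maximal) point is automatically $m'$-maximal, $y$ is itself one of the points being compared. Lemma~\ref{l:not-top} says that whenever $x$ is $m'$-maximal the maximal point lies in $(x-2m',x]$; equivalently $y\in(x-2m',x]$, i.e.\ $x\in[y,y+2m')$. So the form I would actually use is: every $m'$-maximal point of $A$ lies in the half-open window $[y,y+2m')$ of length $2m'=m/2$, whose left endpoint is $y$.

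First I would locate $y$ inside the named interval. Applying this window statement to the hypothesis that $0$ is $m'$-maximal gives $0\in[y,y+2m')$, which modulo $m=4m'$ says precisely $y\in(2m',4m']$. Hence the true maximum already lies in the interval $(2m',4m']$ appearing in the statement, and being $m'$-maximal it is one of the candidate points we are scanning over.

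It then remains to verify that $y$ is the \emph{first} candidate, i.e.\ that no point of $(2m',y)$ is $m'$-maximal. This is immediate from the window statement: every $m'$-maximal point lies in $[y,y+2m')$, and for $y\in(2m',4m']$ this window reduces modulo $4m'$ to $[y,m)\cup[0,y-2m')$, which meets $[0,2m']$ only in $[0,y-2m')\subseteq[0,2m')$ and is therefore disjoint from $(2m',y)$. Thus, scanning $(2m',4m']$ upward from $2m'+1$, the first $m'$-maximal point encountered is $y$, as required.

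The only care needed is the cyclic bookkeeping modulo $m=4m'$: one must confirm that the interval $(0-2m',0]$ is exactly $(2m',4m']$, and that the wrapped window $[y,y+2m')=[y,m)\cup[0,y-2m')$ avoids $(2m',y)$ (with the degenerate case $y=4m'\equiv 0$ handled in passing, since then $(2m',y)$ has no interior $m'$-maximal points either). These are routine, so I expect no genuine obstacle; the whole corollary is Lemma~\ref{l:not-top} applied once at the given point $0$ and once at an arbitrary competing $m'$-maximal point.
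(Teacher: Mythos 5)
Your proof is correct and follows essentially the same route as the paper: both arguments rest on two applications of Lemma~\ref{l:not-top} (at $0$, to place the maximum in $(2m',4m']$, and at a competing $m'$-maximal point), plus the fact that the true maximum is itself $m'$-maximal. The only difference is bookkeeping direction -- you anchor at the true maximum $y$ and exclude earlier candidates, whereas the paper anchors at the first candidate $x$ and pins the maximum into $(2m',x]$ -- which is just the contrapositive of the same step.
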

\begin{proof}
  Let $x$ be the first point in $(2m',4m']$ that is $m'$-maximal.  By
  Lemma~\ref{l:not-top} applied to $0$, we know that the actual
  maximum is in $(2m',4m']$ and, by applying it to $x$, that the
  actual maximum is not in $(x-2m',x]$. Combining these we see that
  the actual maximum must be in $(2m', x]$. Since the actual maximum
  must be $m'$-maximal this implies that $x$ is the actual maximum.
\end{proof}

\begin{lemma}\label{l:earliest-latest}
  Suppose that $A$ is a partial pair set and that $x$ is $m'$-maximal
  in $\amax$ and that $y$ is such that there are no free points in
  the interval $[x,y)$. Let $A'=A+\one|_{[y,y+m')}+\one|_{[y-m',y)}$ and let
  $x'$ be the maximum point of $A'$.

  Then
  \begin{enumerate}[label=(\alph*)]
    \item\label{t:p:part-0}
    $x'\in(x-2m',x]$,
    \item\label{t:p:part-1}
    $x'$ is $m'$-maximal in $\amax$,
    \item\label{t:p:part-2} if $x'\not \in (y-m',x]$ then $A$ has no
    free points in the interval $[x',y-m')$,
  \item\label{t:p:part-3} for any (full) pair set $B$ extending
  $A+\one|_{[y,y+m')}$, the maximum point of $B$ lies in the
  interval $[x',x]$.
\end{enumerate}
\end{lemma}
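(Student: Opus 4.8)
The plan is to reduce all four parts to a single comparison between $A'$ and $\amax$, and then feed the outputs into Lemma~\ref{l:not-top} and Corollary~\ref{c:least-max-point}. Two opening observations drive everything. First, $[y-m',y+m')$ has length $2m'=m/2$ and so contains exactly one point of each opposite pair; hence $A'$, obtained from $A$ by filling every free point of $[y-m',y+m')$, is a \emph{full} pair set, so the pair-set lemmas apply to it. Second, $A'$ and $\amax$ agree at every point except at free points lying outside $[y-m',y+m')$, where $\amax$ has the point and $A'$ does not. Consequently $A'|_I\le \amax|_I$ for every interval $I$, with equality precisely when $I$ contains no free point outside $[y-m',y+m')$.

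For part~\ref{t:p:part-0} I would first show that $x$ is $m'$-maximal in $A'$. Since $[x,y)$ contains no free points and $x\le y$, every free point of $[x,x+m')$ lies in $[y,x+m')\subseteq[y,y+m')$, so $[x,x+m')$ has no free point outside $[y-m',y+m')$ and hence $A'|_{[x,x+m')}=\amax|_{[x,x+m')}$. For any $w$ this gives $A'|_{[w,w+m')}\le\amax|_{[w,w+m')}\le\amax|_{[x,x+m')}=A'|_{[x,x+m')}$, the middle inequality because $x$ is $m'$-maximal in $\amax$. Thus $x$ is $m'$-maximal in the full pair set $A'$, and Lemma~\ref{l:not-top} yields $x'\in(x-2m',x]$. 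For part~\ref{t:p:part-1}, as $x'$ is the maximal (hence $m'$-maximal) point of $A'$ we get $A'|_{[x',x'+m')}=A'|_{[x,x+m')}=\amax|_{[x,x+m')}$, which is the largest $m'$-value occurring anywhere in $\amax$; but $\amax|_{[x',x'+m')}\ge A'|_{[x',x'+m')}$ and cannot exceed this maximum, so the two coincide and $x'$ is $m'$-maximal in $\amax$. Note that parts~\ref{t:p:part-0} and~\ref{t:p:part-1} need only the two opening observations, not part~\ref{t:p:part-2}.

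Part~\ref{t:p:part-3} splits into an upper and a lower bound. For the upper bound, any full pair set $B\supseteq A+\one|_{[y,y+m')}$ again agrees with $\amax$ on $[x,x+m')$ (its free points there lie in $[y,y+m')$ and are filled in both), so exactly as in part~\ref{t:p:part-0} the point $x$ is $m'$-maximal in $B$ and Lemma~\ref{l:not-top} places the maximum of $B$ in $(x-2m',x]$, i.e.\ at or before $x$. For the lower bound I would invoke Corollary~\ref{c:least-max-point}: since $x$ is $m'$-maximal in both $A'$ and $B$, each maximum is the \emph{first} $m'$-maximal point of the respective set in $(x-2m',x]$, and by part~\ref{t:p:part-1} that first point for $A'$ is $x'$. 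It then suffices to rule out any $w\in(x-2m',x')$ being $m'$-maximal in $B$; such a $w$ would (by the argument of part~\ref{t:p:part-1}) be $m'$-maximal in $\amax$ and would force $B$ to match $\amax$ on $[w,w+m')$, which the representatives already fixed inside $[y,y+m')$ by $A+\one|_{[y,y+m')}$ obstruct. Making this obstruction precise is exactly where the free-point localisation of part~\ref{t:p:part-2} is used.

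Part~\ref{t:p:part-2} is the technical heart, and the step I expect to be the main obstacle. Its content reduces to showing that $[x',x)$ contains no free point in the case $x'\le y-m'$: indeed $[x,y-m')\subseteq[x,y)$ is free-point-free by hypothesis, so any free point of $[x',y-m')$ outside $[x,y)$ must lie in $[x',x)$. Part~\ref{t:p:part-1} already forbids free points of $[x',x'+m')$ lying outside $[y-m',y+m')$, which handles the free points of $[x',x)\cap[x',x'+m')$; the residual danger is a free point $z\in[x'+m',x)$. Since $x<x'+2m'$, such a $z$ lies in $[x'+m',x'+2m')$, so its opposite $z+2m'$ lies in $(x'-m',x')$, immediately before the maximal point $x'$, and is present in $A'$. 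I would extract a contradiction from this configuration using the opposite-point lemmas: the opposite of the maximal point $x'$ is minimal, while Lemma~\ref{l:not-min} forbids any point of $[x',x'+m']$ from being $m'$-minimal, so an occupied point just before $x'$ together with the free (hence, in $A'$, unoccupied) point just after it forces either a periodicity of $A'$, impossible for a cyclic pair set by Lemma~\ref{l:unique-max}, or a block near $x'+2m'$ that is too large to be minimal. Tracking these lexicographic comparisons through the opposite pairing, so as to pin down the contradiction exactly, is where the real work lies; Corollary~\ref{c:max-r} and its $\amax$-version Corollary~\ref{c:max-r-amax} are the natural tools for controlling the blocks straddling $x'$.
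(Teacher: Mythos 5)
Parts~\ref{t:p:part-0}, \ref{t:p:part-1} and the upper bound in part~\ref{t:p:part-3} of your proposal are correct and essentially identical to the paper's argument: everything there rests on the identity $A'|_{[x,x+m')}=\amax|_{[x,x+m')}$ and the squeeze $\amax|_{[x',x'+m')}\ge A'|_{[x',x'+m')}\ge A'|_{[x,x+m')}=\amax|_{[x,x+m')}$. The problems are exactly at the two places you yourself flag as ``the real work'', and in neither place is that work done. For part~\ref{t:p:part-2} you correctly observe that the window equality $\amax|_{[x',x'+m')}=A'|_{[x',x'+m')}$ (which is the paper's entire proof of this part) only excludes free points of $[x',y-m')$ lying within $m'$ of $x'$, and that a putative free point $z\in[x'+m',x)$ remains to be excluded; spotting this is a genuine observation about the terseness of the paper's own write-up. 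But your treatment of that residual case is only a gesture -- ``forces either a periodicity of $A'$ \dots or a block near $x'+2m'$ that is too large to be minimal'' -- and you explicitly defer ``tracking these lexicographic comparisons''. No contradiction is actually derived, and it is far from clear that the lemmas you point to (Lemma~\ref{l:not-min}, Corollary~\ref{c:max-r}, Corollary~\ref{c:max-r-amax}) assemble as described, so part~\ref{t:p:part-2} is not proved.

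The lower bound in part~\ref{t:p:part-3} has a missing idea, and your route cannot close as sketched. You reduce correctly to showing that no $w\in(x-2m',x')$ is $m'$-maximal in $B$, and correctly deduce that such a $w$ would satisfy $B|_{[w,w+m')}=\amax|_{[w,w+m')}$. But the ``obstruction'' you invoke -- the points of $[y,y+m')$ forced into $B$ -- can only conflict with that equality through opposite pairs: it requires a \emph{free} point of $[w,w+m')$ lying in $[y-2m',y-m')$, whose opposite is forced into $B$ so that the point itself is absent from $B$ yet present in $\amax$. Nothing guarantees such a free point exists (the hypotheses only ever supply free-point-\emph{free} intervals), and part~\ref{t:p:part-2}, which you cite as making the obstruction precise, asserts the \emph{absence} of free points on $[x',y-m')$ and so works against you rather than for you; note also that you make part~\ref{t:p:part-3} depend on part~\ref{t:p:part-2}, whereas in the paper they are independent. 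The idea you are missing is the paper's reflection trick: since $B$ and $A'$ are both full pair sets, the agreement $B|_{[x,y+m')}=A'|_{[x,y+m')}$ reflects through opposite pairs to give $B|_{[x-2m',y-m')}=A'|_{[x-2m',y-m')}$; combined with $A'|_{[y-m',y)}\ge B|_{[y-m',y)}$ this yields $A'|_{I}\ge B|_{I}$ for every window $I\subseteq[x-2m',y+m')$, whence every $m'$-maximal point of $B$ in $(x-2m',x]$ is automatically $m'$-maximal in $A'$, and Corollary~\ref{c:least-max-point} gives $z\ge x'$ directly -- with no contradiction argument, no free points needed, and no appeal to part~\ref{t:p:part-2}.
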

\begin{proof}
  First observe that, since there are no free points in $[x,y)$ we
  have $A'|_{[x,x+m')}=\amax|_{[x,x+m')}$. This observation, together
  with $x'$ being $m'$-maximal in $A'$, shows that
  \[A_{\max}|_{[x',x'+m')}\ge A'|_{[x',x'+m')}\ge
  A'|_{[x,x+m')}=\amax|_{[x,x+m')}.\] Since $x$ is $m'$-maximal in
  $\amax$ this shows that $x'$ is also $m'$-maximal in $\amax$ and
  that each of the inequalities must actually be an equality.

  This shows that $x$ is $m'$-maximal in $A'$, so by
  Corollary~\ref{c:least-max-point}, $x'\in (x-2m',x]$ which is
  part~\ref{t:p:part-0}.  It also shows that $x'$ is $m'$-maximal in
  $\amax$ which is part~\ref{t:p:part-1}.

  Moreover, it shows that $A_{\max}|_{[x',x'+m')}= A'|_{[x',x'+m')}$
  so, in particular, if $x'\not \in (y-m',x]$ then, since any free
  points in the interval $[x',y-m')$ would be present in $\amax$ but
  absent in $A'$ there cannot be any such free points.

  To prove part~\ref{t:p:part-3} observe that, since there are no free
  points in $[x,y)$, $B|_{[x,y)}=A'|_{[x,y)}$, and by definition
  $B|_{[y,y+m')}=A'|_{[y,y+m')}$. Thus, 
  \begin{equation}
  B|_{[x,y+m')}=A'|_{[x,y+m')}.\tag{$*$}\label{e:x,y+m'}    
  \end{equation}

  In particular $B|_{[x,x+m')}=A'|_{[x,x+m')}=\amax|_{[x,x+m')}$, so $x$ is
  $m'$-maximal in $B$. Let $z$ be the maximum point of $B$.  By
  Corollary~\ref{c:least-max-point}, we see that $z\in (x-2m',x]$. 

  Also, since both $A'$ and $B$ are full pair sets, (\ref{e:x,y+m'})
  shows that $B|_{[x-2m',y-m')}=A'|_{[x-2m',y-m')}$.  By definition
  $A'|_{[y-m',y)}\ge B|_{[y-m',y)}$ (i.e. restricted to this interval,
  $A'$ contains $B$). Combining these, we see that
  $A'|_{[x-2m',y)}\ge B|_{x-2m',y)}$.  In particular, any $m'$-maximal
  point of $B$ in $[x-2m',x]$ is also $m'$-maximal in $A'$. Thus by
  Corollary~\ref{c:least-max-point} we see that $x'\le z$.
\end{proof}

We are now in a position to prove Lemma~\ref{l:key-lemma}.
\begin{proof}[Proof of Lemma~\ref{l:key-lemma}]\mbox{}

  We may assume that 0 is the maximum point of $\amax$.  Write $A^k$
  for the set $A+\one|_{[km',(k+1)m')}$ (so $A^4=A^0$ etc -- we use
  whichever expression is convenient). Similarly, write $A^{k,k+1}$ for
  the set $A+\one|_{[km',(k+1)m')}+\one|_{[(k+1)m',(k+2)m')}$ and note
  that $A^{k,k+1}$ is a cyclic pair set.

  Let $x_k$ be the maximum point of $A^{k,k+1}$.  Observe that
  $A^{4,5}|_{[0,2m')}=\amax|_{[0,2m')}$ so 0 is maximal in $A^{4,5}$;
  i.e., $x_4=0$.

  By Lemma~\ref{l:earliest-latest}\ref{t:p:part-1} applied with
  $x=y=0$, the maximum point $x_3$ of $A^{3,4}$ is $m'$-maximal in
  $\amax$. By Corollary~\ref{c:max-r-amax} either $x_3=x_4$ or
  $x_3<3m'$. If $x_3=x_4$ then, by
  Lemma~\ref{l:earliest-latest}\ref{t:p:part-3}, we see that the
  maximum point of any set $B$ extending $A^0$ is $0$, and the claimed
  result is trivially true with $z_1=z_2=0$.

  Thus, we may assume $x_3<3m'$. Then
  Lemma~\ref{l:earliest-latest}\ref{t:p:part-2} shows that there are
  no free points in $[x_3,3m')$.  Now we can apply
  Lemma~\ref{l:earliest-latest}\ref{t:p:part-1} again but this time with $x=x_3$ and
  $y=3m'$. This shows that $x_2$ is $m'$-maximal in $\amax$.

  If $x_2> 2m'$ then, by
  Lemma~\ref{l:earliest-latest}\ref{t:p:part-3}, any set extending
  $A^3$ has maximum point in $[x_2,x_3]$ and any set extending $A^4$
  has maximum point in $[x_3,0]$ (recall $x_4=0$). Thus, in this case
  we are done with $t=x_3$ and $s=x_3-x_2$.

  Thus we may assume $x_2\le 2m'$ and thus, by
  Lemma~\ref{l:earliest-latest}\ref{t:p:part-2}, that that there are
  no free points in $[x_2,2m')$.  We apply
  Lemma~\ref{l:earliest-latest}\ref{t:p:part-1} again, this time with $x=x_2$ and
  $y=2m'$. This shows that $x_1$ is $m'$-maximal in $\amax$.

  If $x_3-x_1<2m'$ then, as above, we are done: any set extending $A^2$
  has maximum point in $[x_1,x_2]$ and any set extending $A^3$ has
  maximum point in $[x_2,x_3]$. 

  Thus we may assume $x_3 -x_1\ge 2m'$ and, in particular, that
  $x_1\le m'$. Hence, Lemma~\ref{l:earliest-latest}\ref{t:p:part-2}
  implies that $A$ has no free points in $[x_1,m')$.

  To summarise, we have that, for each $k$, $x_k$ is $m'$-maximal
  in $\amax$, $x_k\le km'$, and $A$ does not have any free points
  in the interval $[x_k,km')$.
  
  If, for any $k$ we have $x_{k+2}-x_k<2m'$ then as above we are done:
  any set extending $A^{k+1}$ has maximum point in $[x_k,x_{k+1}]$ and any set
  extending $A^{k+2}$ has maximum point in $[x_{k+1},x_{k+2}]$.

  Hence the only remaining case is that both $x_2=2m'$ (i.e
  $x_2=x_0+2m'$) and $x_3=x_1+2m'$.  Then both $0$ and $2m'$ are
  $m'$-maximal in $\amax$ and, in particular,
  $A|_{[0,m')}=A|_{[2m',3m')}$. This means that there can be no fixed
  point (i.e., non-free point) in these intervals as such a point
  would be fixed differently in the two sets. Hence all points in
  these intervals are free.

  But, if this is the case, then $0$ is the maximal point
  in any set $B$ extending $A^0$. Indeed, since $0$ is maximal in
  $\amax$ we must have that the preceding element (element `$-1$') is
  fixed not in $A$. Thus, in the interval $[2m',4m')$, the set $B$
  consists of $m'$ zeros, $m'-1$ points we don't know about followed
  by another zero. Thus it is trivial to see that, whatever $B$ is, no
  point in $[2m',4m')$ is $m'$-maximal (it would have to have $m'$
  ones following it) and the result follows.
\end{proof}

\section{Some Simple Observations}\label{s:simple}
Before proceeding further we collect some simple observations. Note
that, although some of the proofs are a little long to write out, all
the results in this section are essentially trivial consequences of
our work so far.

\subsection{Bounded size lines}
In Theorem~\ref{t:size2=p2-win} we showed that all transitive games with a line of size 2 are a PII
win. However, this is best possible since our first example of an even
sized \PI win was a game on 6 points with all lines of size 3. It is
easy to extend this to find arbitrarily large examples of \PI win
games with all lines of size 3.
\begin{theorem}\label{t:3-uniform}
  For all $n=12k+6$ there is a transitive avoidance game on
  $n$ points, with all lines of size 3, that is a first player win.
\end{theorem}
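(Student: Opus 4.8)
The plan is to use the factorisation $n=12k+6=6(2k+1)$ and build the board as $2k+1$ disjoint copies of the six-point game described above, which already has all lines of size~3 and is a \PI win. I would label the copies $C_0,C_1,C_1',\dots,C_k,C_k'$, take the line family $\losing$ to be the union of the line families of the individual copies (so every line still has size~3), and observe that the game is automatically transitive: its automorphism group contains all permutations of the copies together with the automorphisms of each single six-point game. Since the latter already act transitively inside a copy and the former move any copy to any other, the whole group acts transitively on the $6(2k+1)$ points.

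Since $2k+1$ is odd, exactly one copy, $C_0$, is left unpaired, and this asymmetry is the whole point. First I would recall that in the six-point game \PI (going first) has a strategy ending with his three points forming a winning set and \PII's three points forming the complementary line; in particular \PI's three points form a set that is not a line, so \PI never holds a line in that copy. \PI's strategy in the big game is then: run this six-point winning strategy inside $C_0$, treating \PII's moves in $C_0$ (and only those) as the opponent's moves of the sub-game; and in each pair $(C_i,C_i')$ mirror \PII via a fixed game-isomorphism $\phi_i\colon C_i\to C_i'$, that is, answer a \PII move at $p$ by playing the corresponding point in the partner copy. Equivalently, let $g$ be the involution swapping each $C_i\leftrightarrow C_i'$ (through $\phi_i$) and fixing $C_0$ pointwise; \PI replies $g(x)$ to any \PII move $x$ outside $C_0$.

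The verification then splits into three routine checks. First, the mirror is always legal: maintaining the invariant that after each \PI move the union of the paired copies is $g$-symmetric, one sees that whenever \PII plays a free point of a paired copy its image under $g$ is also free. Second, \PI never completes a line: in $C_0$ his points are always contained in a winning set and hence never contain a line, and in a pair, if a \PI mirror move completed a line $\ell$ then $g(\ell)$ is a line that \PII had just completed on the previous move, so \PII already lost. Third, \PII is forced to lose: the board is even, so every point is eventually played and in particular $C_0$ fills up; by the six-point strategy \PII's three points in $C_0$ then form a line, so \PII completes a line at the latest when $C_0$ becomes full. Here I would stress that a player loses the instant his own points contain a full line, so it does not matter that this completing move need not be the global final move.

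The only genuinely delicate point is the turn-order bookkeeping in the third step: I must check that \PI can always supply his $C_0$-response (there is always a free point of $C_0$ available for his replies to \PII's first two moves in $C_0$) and that the induced order of play inside $C_0$ is exactly $\PI,\PII,\PI,\PII,\PI,\PII$, so that the six-point winning strategy applies verbatim. Everything else is a direct consequence of the mirroring argument (in the spirit of Lemma~\ref{l:fpfi}) combined with the known six-point win, which is precisely why this is an easy extension; the reason the construction needs $2k+1$ odd is exactly that an even number of copies could be paired up completely, giving a fixed-point-free involution and hence a \PII win.
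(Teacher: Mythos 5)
Your proposal is correct and follows essentially the same route as the paper: the paper also takes the disjoint union of $2k+1$ copies of the six-point game, fixes an involution on the copies with exactly one fixed copy, runs the six-point winning strategy $\Phi$ inside the fixed copy, and mirrors \PII's moves between paired copies. Your additional bookkeeping (legality of the mirror, why \PI never completes a line, and the forced alternation $\PI,\PII,\dots$ inside $C_0$) is exactly the verification the paper leaves as ``easy to see.''
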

\begin{proof}
  Let $\cH_0$ be the transitive avoidance game on 6 points that is a
  \PI win given by Proposition~\ref{p:2b} and let $\Phi$ be a winning
  strategy for $\cH_0$. Let $\cH$ be the disjoint union of  $2k+1$
  copies of $\cH_0$ which we write as $[2k+1]\times \cH_0$.

  Obviously $\cH$ has $12k+6$ points, all lines have size 3, and it is
  transitive.  Thus we just need to show that this game is a \PI win.

  Let $f$ be an involution from $[2k+1]\to[2k+1]$ fixing $1$ and
  having no other fixed point. \PI starts by playing according to the
  winning strategy in the first copy of $\cH_0$. For all subsequent
  moves he plays as follows. Suppose that \PII has just played a point
  $(x,y)\in [2k+1]\times \cH_0$. If $x\not=1$ then \PI plays
  $(f(x),y)$; if $x=1$ then \PI plays according to $\Phi$ in the first
  copy of $\cH_0$. It is easy to see that \PI can follow this strategy
  (i.e., he never has to play a point that has already been played), and
  that this strategy is a \PI win.
\end{proof}
One might wonder whether size 3 is special in the above theorem, but it
is easy to show that there are arbitrarily large games that are \PI wins
and have all lines of any fixed size greater than 2.

\begin{corollary}\label{c:r-uniform}
  For any $r\ge 3$ and $n_0$ there is a transitive game on $n>n_0$ points
  with lines all of size $r$ that is a first player win.
\end{corollary}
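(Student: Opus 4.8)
The plan is to combine two ingredients already developed in the paper. First, for each fixed $r$ I would produce a single transitive avoidance game that is a \PI win and has all lines of size exactly $r$ (an \emph{atom}). Then I would inflate this atom to arbitrarily large board size using the disjoint-union construction from the proof of Theorem~\ref{t:3-uniform}, which preserves both transitivity and the common line size while turning any transitive game that is a \PI win into arbitrarily large ones.

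For the inflation step I would take $2k+1$ disjoint copies of the atom $\cH_0$ and run exactly the strategy from the proof of Theorem~\ref{t:3-uniform}: \PI plays a winning strategy $\Phi$ inside the first copy, and mirrors \PII elsewhere using an involution of $\{1,\dots,2k+1\}$ that fixes $1$ and pairs up the rest. Nothing in that argument uses special features of the $6$-point game beyond its being a transitive game that is a \PI win with a known winning strategy, so it applies verbatim to any atom. The resulting board has size $(2k+1)\,|\cH_0|$, its automorphism group is transitive (permute the copies, and act within each copy), and every line still has size $r$; choosing $k$ large gives $n>n_0$.

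The atom is built by a short case split according to whether $r$ is composite or prime. If $r$ is composite, write $r=uv$ with $u,v\ge 2$ and set $p=2u-1$ and $q=2v-1$; these are odd and at least $3$, so Theorem~\ref{t:n-composite-odd} applies to $n=pq$ and produces a transitive game that is a \PI win whose lines are exactly the $p'q'$-subsets lying outside $\cW$. Since $p'=(p+1)/2=u$ and $q'=(q+1)/2=v$, these lines all have size $p'q'=uv=r$, as required. (For example $r=4$ gives $p=q=3$ on $9$ points, and $r=8$ gives $p=3,\,q=7$ on $21$ points.) If instead $r$ is prime, then $r$ is odd since $r\ge 3$, so Proposition~\ref{p:2b} with $b=r$ yields, on $2r$ points, a transitive game that is a \PI win whose lines, being the complements of the size-$r$ winning sets, all have size $r$.

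The main obstacle --- and the reason for the case split --- is the prime case. For composite $r$ the bounded-line game of Theorem~\ref{t:n-composite-odd} hands us lines of size $p'q'$, and every composite $r$ factors as $uv$ with both factors at least $2$; but when $r$ is prime the only factorisations force a degenerate (size-$1$) factor, so this construction cannot realise a prime line size. This is exactly where one falls back on the half-the-board game of Proposition~\ref{p:2b}, which does give odd (in particular prime) line sizes on $2r$ points; the only point to check there is the parity, namely that a prime $r\ge 3$ is odd so that Proposition~\ref{p:2b} is applicable. The remaining verifications --- that the inflation argument of Theorem~\ref{t:3-uniform} is valid for a general atom, and that the stated factorisations really give $p,q\ge 3$ --- are routine.
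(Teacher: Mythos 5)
Your proposal is correct, but it takes a genuinely different route from the paper's proof. The paper never builds an $r$-uniform ``atom'' at all: it takes the $3$-uniform game $\cH$ of Theorem~\ref{t:3-uniform} with $k=\max(n_0,2r)$, keeps the \emph{same} board, and fattens the lines, declaring the new lines to be all $r$-sets containing some old $3$-line, i.e.\ $\cL'=\{L'\in[n]^{(r)}:L'\supset L \text{ for some } L\in\cL\}$ (transitivity is inherited since automorphisms preserving $\cL$ preserve $\cL'$). \PI plays the old strategy; once \PII completes a $3$-line he forms a set of $\cL'$ as soon as he owns $r$ points, and \PI pads his remaining moves by playing in previously untouched copies of $\cH_0$ (possible since $k\ge 2r$), so that \PI himself never accumulates a set in $\cL'$. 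Your construction instead manufactures an honest $r$-uniform \PI win first --- via Theorem~\ref{t:n-composite-odd} with $p=2u-1$, $q=2v-1$ when $r=uv$ is composite, and via Proposition~\ref{p:2b} with $b=r$ when $r$ is prime (hence odd) --- and then inflates it by the disjoint-union mirroring trick; the checks you defer do go through: the mirroring argument of Theorem~\ref{t:3-uniform} uses nothing about $\cH_0$ beyond transitivity and a winning strategy $\Phi$ (the parity bookkeeping works for both odd- and even-sized atoms, since in the distinguished copy \PI always moves immediately after \PII there, and a full board would force a completed copy-$1$ game in which $\Phi$ has already defeated \PII), and your atoms' lines do have size exactly $r$ in both cases. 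One simplification: your case split could be by parity rather than primality, since Proposition~\ref{p:2b} already handles \emph{every} odd $r\ge 3$, and every even $r\ge 4$ is composite. As for what each approach buys: the paper's is shorter given Theorem~\ref{t:3-uniform}, needs no case analysis on $r$, and uses the single family of board sizes $12k+6$ for all $r$, at the price of lines that are merely fattened $3$-lines; yours produces games whose lines are natively $r$-uniform and avoids the padding argument, at the price of the case split and of re-verifying the inflation step for a general atom.
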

\begin{proof}
  Take the game $\cH$ with lines $\cL$ given by 
  Theorem~\ref{t:3-uniform} with $k=\max(n_0,2r)$. Define the new game
  $\cH'$ to have the same board and to have lines 
  \[\cL' =\{L'\in [n]^{(r)}:L'\supset L \text{ for some $L\in\cL$}\}.\]

  Let \PI play as above. Then at some point \PII forms a set
  $L\in\cL$. If \PII has played at least $r$ points then he has formed
  a set $L'\in\cL'$ (any superset of a set $L$ which has size $r$ is
  in $\cL'$). If \PII has not played this many points then \PI just
  has to continue playing until \PII has played $r$ points without
  forming a set in $\cL'$ himself. This is trivial: on each turn \PI
  plays in a copy of $\cH_0$ that has not been played in previously
  (where $\cH_0$ is as in the construction for $\cH$). This is
  possible as $k\ge 2r$.
\end{proof}
\subsection{Fast Player I wins}
In the examples of \PI wins we have given so far \PII can avoid losing
for a long time: indeed, in the even case he need not lose until the
last turn of the game, and in the odd case he can play for at least time
$n/4$ (in the construction in Theorem~\ref{t:n-composite-odd} for
board size $pq$ all lines have size $p'q'\ge pq/4$).

However, an iterated variant of the odd size `majority of
majorities' construction in Theorem~\ref{t:n-composite-odd} shows that
there are games where \PII must lose in time $o(n)$.
\begin{theorem}
  For any $\eps>0$ there is a game $\cH$ on $n$ points such that \PII
  must lose before time $\eps n$.
\end{theorem}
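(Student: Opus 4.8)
The plan is to build a single game by iterating the ``majority of majorities'' idea of Theorem~\ref{t:n-composite-odd} to depth $d$, so that the critical set size shrinks geometrically relative to $n$ while the forcing argument survives.

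Fix an odd $p\ge 3$ and let $q'=(p+1)/2$. I would take the board to be the set of leaves of the complete $p$-ary rooted tree of depth $d$, so that $n=p^d$, and define a family of \emph{winning sets} $\cW_d$ recursively: $\cW_0$ is the single leaf, and $S\in\cW_d$ if and only if there are exactly $q'$ of the $p$ depth-one subtrees on which $S$ restricts to a set in $\cW_{d-1}$, with $S$ empty on the remaining subtrees. Thus every set in $\cW_d$ has size $s_d:=(q')^d=\big(\tfrac{p+1}{2}\big)^d$. The lines are $\cL_d=[n]^{(s_d)}\setminus\cW_d$, exactly as in Theorem~\ref{t:n-composite-odd}. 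Transitivity is immediate, since the iterated wreath product of copies of the symmetric group $S_p$ (permuting the children at every node) preserves $\cW_d$ and acts transitively on the leaves.

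Two facts drive the argument. First, $\cW_d$ is intersecting: two choices of $q'$ subtrees out of $p$ share at least $2q'-p=1$ common subtree, and on that subtree the two winning restrictions meet by induction (the base case being trivial). Second -- and this is the content -- I would show that \PI can force his first $s_d$ moves to form a set in $\cW_d$, no matter how \PII plays. Granting this, the intersecting property forces \PII's first $s_d$ points (which are disjoint from \PI's winning set) to lie outside $\cW_d$, hence to be a line in $\cL_d$; since all lines have size $s_d$ neither player can complete one earlier, so \PI stays safe throughout and \PII is forced to lose on his $s_d\th$ move, i.e.\ at time $2s_d$. Choosing $p=3$ and $d$ large enough that $2s_d/n=2\big(\tfrac{p+1}{2p}\big)^d=2(2/3)^d<\eps$ then produces a game in which \PII must lose before time $\eps n$.

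The hard part will be establishing the recursive forcing claim, which is a lift of the three-rule strategy in the proof of Theorem~\ref{t:n-composite-odd}. There each bin needed only a simple majority and \PI kept a lead by responding in the same bin; here I would instead treat each depth-one subtree as a ``bin'' and replace ``hold a majority of a bin'' by ``hold a $\cW_{d-1}$-winning set of a subtree'', invoking the induction hypothesis to guarantee that \PI can complete such a subtree-win using exactly $s_{d-1}$ of his own moves however \PII interferes inside it. The delicate bookkeeping is to run the top-level analogue of Rules 1--3 (respond in an active subtree; otherwise open a fresh subtree until $q'$ of them are active; otherwise advance an active one) so that \PI commits to exactly $q'$ subtrees and finishes all of them within $q'\,s_{d-1}=s_d$ of his moves. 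The key invariant, as before, is that after each of \PI's moves strictly more than half of the subtrees he has entered are won or on track, while the other $p-q'$ subtrees are left untouched so that his final set is genuinely a member of $\cW_d$.
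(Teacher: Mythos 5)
Your proposal is correct and is precisely the approach the paper has in mind: the paper omits the proof of this theorem, describing it only as ``an iterated variant of the odd size `majority of majorities' construction'', which is exactly your recursive tree construction with winning sets built from $\cW_{d-1}$-wins in $q'$ of the $p$ subtrees and the lifted Rules 1--3 strategy. The details you flag as delicate (in particular that the inductive hypothesis must tolerate \PII effectively passing inside a subtree when \PI advances it unprompted) do go through with the same invariants as in Theorem~\ref{t:n-composite-odd}, so your sketch fills in the bookkeeping the paper calls ``simple'' but ``tedious to write out''.
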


The proof of this Theorem, whilst simple, is a little tedious to write
out. Since we prove a substantially stronger result in the next
section we omit the proof.

\subsection{Transitivity}
So far we have required that the game be transitive on the
points. However, many natural games are more transitive than this. For
example the Ramsey Avoidance Game is also line transitive
(recall that edges in that game correspond to our points, and complete
graphs there are our lines). In fact, the Ramsey Avoidance Game is
point/line transitive in that any line, and any point in that line can
be mapped, by an element of the automorphism group, to any other line
and point in that line.

All our \PI wining games so far are not transitive on the lines so it
is natural to ask whether this extra transitivity is enough to rule
out the possibility of a \PI win. We answer this in the next section.

We remark that our `majority of majorities' game may appear transitive
on the lines, and it is true that the it is transitive on the set
$\cW$ of `allowed subsets'. However, it is \emph{not} transitive on
the set $\cL$ of lines.
\section{The Torus Game}\label{s:torus}
In this section we introduce a very natural game that simultaneously
satisfies all of the properties we discussed in the previous section:
\PI can force a win in time $o(n)$, all lines have size 3, and it is
point/line transitive (indeed it is even more: it is also possible to
map any point and line not containing it to any other point and line
not containing it).  
\begin{defn}
  The torus game $\cT_q(d)$ is the game on board $\Z_q^d$, with lines
  $\cL$ defined to be all subsets of the form $\{x,x+y,x+2y,\dots
  x+(q-1)y\}$ for $x,y\in \Z_q^d$ and $y\not=0$.
\end{defn}

We show that the game $\cT_3(d)$ is an avoidance game satisfying all
the conditions mentioned at the start of this section. Obviously, all
lines have size 3, and it is transitive (translation by any element of
$\Z_3^d$ is in its automorphism group). The other stronger
transitive properties mentioned above are also simple to verify.

\begin{theorem}
  The game $\cT_3(d)$ is not a \PII win.
\end{theorem}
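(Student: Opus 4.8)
The plan is to exploit the fact that, over $\Z_3^d$, a line $\{x,x+y,x+2y\}$ is exactly a triple of distinct points summing to $0$, so that the map $\sigma\colon x\mapsto -x$ is an automorphism of the game. Crucially, $\sigma$ is an involution with a single fixed point, the origin $0$ (since $2x=0$ forces $x=0$ in $\Z_3^d$), and it pairs up the remaining $3^d-1$ points into opposite pairs $\{x,-x\}$. This is the odd analogue of the fixed-point-free involution of Lemma~\ref{l:fpfi}: there the involution let \PII mirror \PI to show a game is not a \PI win; here I would instead have \PI first grab the unique fixed point and then mirror \PII, aiming to show the game is not a \PII win.

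Concretely, \PI's strategy is to play $0$ on the first move and thereafter, whenever \PII plays a point $x$, to reply with $-x$. First I would check that this is always legal: since $x\ne 0$ we have $-x\ne 0$, and $-x$ cannot already be claimed (if \PI had played it, it would be the mirror of an earlier \PII move, forcing $x$ to have been played before; if \PII had played it, \PI would already have taken its mirror $x$). Then I would show that \PI never completes a line strictly before \PII. There are two kinds of line \PI could complete. A line through $0$ has the form $\{0,p,-p\}$, and for \PI to hold it he would need both $p$ and $-p$ among his claimed points $\{0\}\cup(-S)$, where $S$ is \PII's set; this would force $S$ to contain both members of the opposite pair $\{p,-p\}$, which is impossible, since mirroring guarantees \PII holds at most one point of each opposite pair. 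A line avoiding $0$ lies in $-S$, so it has the form $\{-a,-b,-c\}$ with $\{a,b,c\}\subseteq S$ a line; but \PI completes $\{-a,-b,-c\}$ only on the move mirroring \PII's completion of $\{a,b,c\}$, so \PII has already lost.

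Putting these together, under this strategy \PI is never the first to occupy a whole line, so \PII can never force a \PI loss; hence $\cT_3(d)$ is not a \PII win. I expect the only points needing genuine care --- the `main obstacle', though a mild one --- to be the timing bookkeeping in the second case (verifying that the mirrored line is always completed \emph{strictly after} the original, never before it or on the same move) together with the legality check of the mirror move, both of which hinge on the fact that \PII never holds two opposite points. I would also remark that this argument yields only `not a \PII win': for small $d$ the mirrored game can end in a draw, whereas the stronger claim advertised for this section --- that \PI in fact wins, and quickly --- would require a separate, more quantitative argument.
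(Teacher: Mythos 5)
Your proposal is correct and is essentially identical to the paper's own proof: \PI claims the unique fixed point $0$ of the involution $x\mapsto -x$ and thereafter mirrors \PII, and any line \PI completes either passes through $0$ (impossible, since \PII never holds both points of an opposite pair) or is the negation of a line that \PII completed one move earlier. Your additional legality check and the closing remark that the argument yields only ``not a \PII win'' are accurate refinements of the same approach, not a different route.
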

\begin{proof}
  The map $g:\Z_3^d\to \Z_3^d $ defined by $g(x)=-x$ is an involution of the board $X$
  with a single fixed point $0$. \PI plays $0$ on his first go. Then
  on each subsequent turn he plays $g(y)$ where $y$ is the point \PII
  just played.

  Observe that this is a valid strategy: \PI never plays a point that
  has already been played. 

  Now suppose that at some point \PI forms a set $L\in \cL$. This set
  cannot contain $0$ since all lines containing $0$ are of the form
  $\{-y,0,y\}$, and if \PI has played $y$ then \PII has played
  $-y$. Thus, if \PI has all the points of $L$ then \PII has already
  played all points of $-L$ which is also a line in $\cL$. Thus \PII
  has already lost.
\end{proof}

\begin{corollary}
  For all sufficiently large $d$ the game $\cT_3(d)$ is a \PI
  win. Moreover, if $t_d$ is the longest \PII can avoid losing then
  $t_d/3^d\to 0$ as $d\to\infty$.
\end{corollary}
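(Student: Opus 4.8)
The plan is to combine the preceding theorem---that $\cT_3(d)$ is not a \PII win---with a simple counting argument driven by the classical \emph{cap set} bound. The first thing to note is that in $\Z_3^d$ three distinct points $a,b,c$ lie on a line of $\cL$ exactly when $a+b+c=0$, so a set of points contains no line of $\cL$ precisely when it is a cap set (a subset of $\Z_3^d$ with no three collinear points, equivalently no three-term arithmetic progression). Write $c_d$ for the maximum size of a cap set in $\Z_3^d$. The single non-elementary ingredient I need is that $c_d=o(3^d)$; this is classical, following for example from Meshulam's bound $c_d=O(3^d/d)$, with far stronger exponential savings now known through the work of Croot, Lev and Pach and of Ellenberg and Gijswijt. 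Any $o(3^d)$ bound is enough for what follows.

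First I would rule out a draw for all large $d$. The board has $n=3^d$ points, which is odd, so at the end of a drawn game \PI holds $(3^d+1)/2$ points and \PII holds $(3^d-1)/2$ points, and by definition of a draw \emph{neither} set contains a line; that is, both are cap sets. But $(3^d-1)/2>c_d$ once $d$ is large, since $c_d=o(3^d)$, so a draw is impossible for all sufficiently large $d$. As the preceding theorem shows the game is not a \PII win, it follows immediately that $\cT_3(d)$ is a \PI win for all sufficiently large $d$.

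For the quantitative claim I would have \PI play the explicit mirroring strategy from the proof of the theorem: \PI opens with $0$ and thereafter answers each point $y$ of \PII with $-y$. That proof already verifies both that this response is always legal and that \PI never completes a line strictly before \PII. Now track \PII's points: after his $k\th$ move (which is move $2k$ of the game) \PII holds exactly $k$ points, and to have survived this long these $k$ points must form a cap set, so $k\le c_d$. Hence \PII is forced to complete a line, and lose, by the time he has played $c_d+1$ points, i.e. by game-move $2(c_d+1)$. Therefore $t_d\le 2c_d+2$, and since $c_d=o(3^d)$ we conclude $t_d/3^d\to 0$. (Since $2c_d+2<3^d$ for large $d$, this argument also re-proves the \PI win, \PII being forced out before the board fills.)

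The main obstacle is entirely concentrated in the cap set upper bound $c_d=o(3^d)$: this is the one deep fact, and without it neither the no-draw step nor the $o(n)$ timing would go through. Everything else is routine bookkeeping---identifying lines with the forbidden zero-sum triples, the move counting, and the legality of the mirroring response (already checked in the theorem). The one point I would take care to state explicitly is that, under the mirroring strategy, \PI indeed never loses first, so that forcing \PII to hold a non-cap set genuinely ends the game as a \PI win rather than a mutual or reversed outcome; this is exactly the content of the preceding theorem and needs no new argument.
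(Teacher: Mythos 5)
Your proof is correct, and it follows the same two-step architecture as the paper: rule out a draw for large $d$ and invoke the preceding theorem to get the \PI win, then use a density statement to force \PII out in $o(3^d)$ moves. The difference lies in which density statement carries the load. The paper observes only that every Hales--Jewett combinatorial line of $[3]^d$ is a game line, and then appeals to the Hales--Jewett theorem (for the no-draw step) and to the density Hales--Jewett theorem for the timing, mentioning cap-set results only in passing as an alternative. You instead identify the game lines \emph{exactly} -- they are the triples of distinct points with $a+b+c=0$, i.e.\ the affine lines of $\Z_3^d$ -- so that line-free sets are precisely cap sets, and then a single ingredient, the bound $c_d=o(3^d)$ (Meshulam suffices), drives both steps. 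This buys two things: the external input is far more elementary than density Hales--Jewett, and your quantitative bookkeeping is cleaner, since under the mirroring strategy you bound \emph{\PII's} point count directly (his points must remain a cap set while he survives), giving the explicit bound $t_d\le 2c_d+2$; the paper's version, by contrast, argues that once the total number of points played reaches $\eps_d 3^d$ ``one player must have lost, so \PII must have lost,'' which leaves implicit both a factor-of-two accounting and the fact that \PI must be playing a strategy (the mirror) under which he cannot be the first to lose -- a point you correctly flag and discharge. The one thing the paper's weaker observation buys is that it needs only the easy inclusion ``combinatorial lines are game lines,'' but since your characterisation of the lines is a two-line verification, that is no real saving.
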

\begin{proof}
  View the torus $\Z_3^d$ as a Hales-Jewett cube $[3]^d$. Observe that
  any combinatorial line in the Hales-Jewett cube is a line in the
  game sense (i.e. is in $\losing$). Now for any $d$ greater than the
  Hales-Jewett number $HJ(q,2)$ (i.e., the smallest $d$ such that any
  two colouring of the cube $[3]^d$ contains a monochromatic
  combinatorial line) the game cannot be a draw, and thus must be a
  \PI win.
  
  To prove the bound on the time observe that, by the density version
  of the Hales-Jewett Theorem~\cite{MR1191743} (or, indeed, standard
  cap-set results), there exists a sequence $\eps_d$ tending to zero
  such that any set of size $\eps_d 3^d$ in $[3]^d$ contains a
  combinatorial line. Thus, by time $\eps_d 3^d$ one player must have
  lost, so \PII must have lost. Hence $t_d/3^d<\eps_d$ so $t_d/3^d\to
  0$ as claimed.
\end{proof}

We conclude with an example showing that there are also even-sized
boards where \PI can win quickly. Moreover, in this example all
lines have size $3$.
\begin{theorem}
  There are games $\cH_d$ with board size $n_d=6\cdot 3^d$ which are
  \PI wins and, moreover, \PII loses game $\cH_d$ in time $o(n_d)$.
\end{theorem}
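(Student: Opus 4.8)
The plan is to place a copy of the $6$-point \PI-win game $\cH_0$ of Proposition~\ref{p:2b} at every point of the torus $\Z_3^d$, and to glue in the torus lines of $\cT_3(d)$ so that \PII cannot stall by spreading his moves across the copies. Concretely, take the board $X=\Z_3^d\times[6]$, so that $|X|=6\cdot 3^d=n_d$, and think of it as $3^d$ \emph{bins} $\{u\}\times[6]\cong\cH_0$ indexed by $u\in\Z_3^d$. The lines of $\cH_d$ are of two kinds: the \emph{bin lines} $\{u\}\times L$ for each losing line $L$ of $\cH_0$, and the \emph{torus lines} $\{(u,p),(u+w,p),(u+2w,p)\}$ for each position $p\in[6]$ and each torus line $\{u,u+w,u+2w\}$ of $\cT_3(d)$. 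All lines have size $3$. Transitivity is immediate: translating the bin index is an automorphism, and applying any automorphism of $\cH_0$ simultaneously in every bin is an automorphism (it permutes positions, hence permutes layers, so preserves both families); together these act transitively on $X$.

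Let $G(u,p)=(-u,p)$ be the involution that mirrors the bin index; it fixes the central bin $\{0\}\times[6]$ pointwise and pairs up the remaining bins. Let $\Phi$ be a \PI winning strategy for $\cH_0$. \PI's strategy combines the torus mirror with $\Phi$: his first move is $\Phi$'s (forced) opening move in the central bin, and thereafter, if \PII plays a point $(u,p)$ with $u\neq0$ he replies with the mirror $(-u,p)$, while if \PII plays in the central bin he replies there according to $\Phi$. Exactly as in the argument for $\cT_3(d)$, the mirror replies are always legal and \PI's non-central points are precisely the $G$-images of \PII's non-central points; in particular \PII never holds both points of a mirror pair $(u,p)$ and $(-u,p)$. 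From this one checks that \PI never completes a line before \PII: a bin line of \PI in a non-central bin $u$ is the $G$-image of a bin line of \PII in bin $-u$; a torus line of \PI avoiding the central bin is the $G$-image of a torus line of \PII; and a torus line of \PI through the central bin would require \PII to hold both points of a mirror pair, which never happens. Finally, the central bin viewed on its own is a faithful play of $\cH_0$ with \PI moving first, so $\Phi$ guarantees that when it fills it is \PII, not \PI, who holds a bin line there.

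It remains to bound the time. Whatever \PII does, he loses after at most three of his moves in the central bin, since after his third central move that bin is full and, by $\Phi$, his three central points form a bin line. On the other hand, for each fixed position $p$ the bin indices in which \PII has played form a subset of $\Z_3^d$ containing no three-term progression (otherwise those points would be a torus line and \PII would already have lost), so by the same cap-set bound used for $\cT_3(d)$ there is a sequence $\eps_d\to0$ with at most $\eps_d 3^d$ such indices for each of the six positions. Hence \PII makes at most $3+6\eps_d 3^d=o(3^d)$ moves before losing, the game ends within $o(n_d)$ turns, and since \PI never loses first this is a \PI win.

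The step I expect to be the crux is verifying that the two halves of \PI's strategy genuinely do not interfere: that the bin-index mirror never forces \PI to complete a line first, even for torus lines meeting the central bin, and that $\Phi$ continues to win the central sub-game despite \PI interleaving arbitrarily many mirror moves between his central replies. The conceptual heart is the observation that the two ways \PII might try to survive are separately bounded: stalling in the ``$\cH_0$ direction'' is limited by the fixed bin size $6$, while stalling in the ``torus direction'' is forbidden by the cap-set theorem.
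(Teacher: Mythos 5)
Your proposal is correct and is essentially the paper's own proof: the same product construction $\cT_3(d)\times\cH_0$ with bin lines and torus lines, the same mirror-plus-$\Phi$ strategy centred on the bin fixed by the involution $(u,p)\mapsto(-u,p)$, and the same cap-set/density Hales--Jewett bound on \PII's survival time. The only addition is your explicit verification of transitivity, which the paper leaves implicit.
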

\begin{proof}
  The construction is an extension of the above. Let $\cH_1$ be the
  game on six points that is a \PI win given by
  Proposition~\ref{p:2b}. We define $\cH_d$ to be $\cT_3(d)\times\cH_1$ where the lines in
  this product are a line in one of the component directions and
  constant in the other.  More precisely, the board is the set
  $\Z_q^d\times \cH_1$ and the lines are the set
  \[
  \left(\bigcup_{\substack{ x\in \cT_3(d) \\ L\in \cL(\cH_1)}}\{x\}\times L\right)\cup
  \left(\bigcup_{\substack{ L\in \cL(\cT_3(d)) \\ y\in \cH_1}} L \times\{y\}\right).
  \]

  We have to show that \PI wins this game and that the game ends
  quickly.

  First, we give a winning \PI strategy. Let $\Phi$ denote the winning
  strategy for \PI in $\cH_1$. As usual we view the board of $\cT_3(d)$ as
  $\{-1,0,1\}^d$. \PI starts by playing in the $(0,0,\dots,0)$ copy of
  $\cH_1$ and plays according to $\Phi$. We call this copy of $\cH_1$ the
  zero copy.

  Now for all subsequent moves \PI does the following. If \PII just
  played in the zero copy the \PI also plays in the zero copy and
  follows the strategy $\Phi$. If \PII played in any other copy
  $(x_1,x_2,\dots,x_d)$ of $\cH_1$ then \PI plays the same point in
  the antipodal copy $(-x_1,-x_2,\dots,-x_d)$ of $\cH_1$.

  Observe that \PI cannot lose by forming any triple not including a
  point in the zero copy of $\cH_1$ as \PII would have formed the
  antipodal set and would have already lost. Furthermore \PI cannot lose
  by forming a triple not wholly contained in the zero copy as \PII would
  have the antipodal point. Finally, \PI does not form a losing triple
  in $\cH_1$ as he is following the winning strategy $\Phi$ there.

  To show that the game ends quickly consider the points in
  $\cT_3(d)\times\{y\}$ for some $y\in\cH_1$. If a player has more
  than $\eps \|\cT^d\|$ points in this set then, by the density
  Hales-Jewett theorem, provided $d$ is sufficiently large, he must
  have a combinatorial line which is a losing set in our game. Thus,
  if \PII has not lost he has played a total of at most
  $6\eps|\cT^d|=\eps |\cH_d|$ points.
\end{proof}

\section{Open Problems}
Our first open question concerns the case when $n$ is prime.
\begin{question}
  For which primes $n$ does there exist a transitive avoidance game on
  $n$ points that is a first player win?
\end{question}
We know almost nothing in this case. When $n=3$, $5$ or $7$ there is
no transitive avoidance game that is a \PI win.  Indeed, $3$ is
trivial; $5$ follows immediately from Theorem~\ref{t:graph}. The case
$|V|=7$ is slightly trickier but, by Theorem~\ref{t:graph}, we only
need to consider the case where all lines have size three and, since $7$ is prime, we may
assume that the cyclic group $C_7$ acts on the board. This
reduces the problem to a manageable number of cases.

However, for 11 and 13 there \emph{are} transitive avoidance games
that are first player wins. These were found by computer search. The
games we find are of the following form: there is a transitive
intersecting family $\cW$ of $\frac{n-1}{2}$-sets that \PI can
guarantee to make one of in his first $\frac{n-1}{2}$ moves. Thus
setting $\cL$ to be all other $\frac{n-1}2$-subsets we get the
required game.

For $n=11$, $\cW$ is the set of all affine copies of the set
$\{0,1,2,4,5\}$. For $n=13$, $\cW$ is the set of all affine copies of
the any of the sets $\{0,1,2,4,5,6\}$, $\{0,1,2,4,5,7\}$ or
$\{0,1,3,4,5,7\}$. However, we do not have a `nice' strategy for
either game.

We remark that we searched for games transitive under the affine group
as this reduced the number of orbits to a level manageable by computer
search. There are other \PI wins which are less symmetric, and there
are some which are not of this `maker' form. 

We do not know the anything about the answer to this question for any
primes greater than~13. Indeed, we do not even know if there are
infinitely many such primes. Moreover, the number of possible
transitive games for 17 is extremely large (even if we restrict to
affinely transitive games) which makes a computer search, even for the
next open case, impractical.

\medskip
Theorem~\ref{t:3-uniform} and Corollary~\ref{c:r-uniform} answer the
question of for which $n$ and $r$ there exist transitive avoidance games on $n$
points with lines all of size $r$ that are \PI wins, for infinitely many values of $n$ and
$r$. However, the full characterisation remains open.

\begin{question}
  For which $n$ and $r$ does there exist a transitive
  avoidance game on $n$ points with all lines of size $r$ that is a \PI win.
\end{question}
In particular we do not know the full characterisation even for $r=3$.

\medskip
A family of games of particular interest is the class of
\emph{sim-like} games, which is defined as follows. These games have
board the edge set of the complete graph $K_n$.  The lines are sets of
edges that are isomorphic to some forbidden graph (or family of
graphs). For example, the Ramsey Avoidance Game $RAG(n,k)$ is of this
form: the board is the edge set of $K_n$ and the lines are the
subgraphs isomorphic to $K_k$. (They have been called \emph{sim-like}
as the first non-trivial Ramsey Avoidance Game, $RAG(6,3)$, is
commonly called Sim.)

We do not know if there are any sim-like games that are \PI wins.
\begin{question}
  Does there exist a sim-like game that is a \PI win?
\end{question}

One particular property of sim-like game is that they have a large
automorphism group. Indeed, the automorphism group of a sim-like game
played on $K_n$ trivially contains $S_n$. We do not know whether this
itself is enough to force a \PII win.
More generally it would be interesting to characterise for which
automorphism groups there exist \PI wins.

\begin{question}
  For which groups $G$ does there exist a transitive avoidance game 
  with automorphism group $G$ that is a \PI win?
\end{question}

\medskip Finally, there is a natural definition of an infinite
avoidance game. In this case the board has infinite size (with all
lines finite) and a player loses if he forms a line. If the play
continues forever with neither player losing then the game is deemed a
draw.
\begin{question}
  Is there an infinite transitive avoidance game that is a \PI win?
\end{question}
We have no intuition as to the correct answer to this question;
indeed, we do not even know the answer when all losing lines have size
three. We remark that any such game must have `many' losing lines, to
avoid \PII having an easy draw -- for example, by picking a faraway
point that does not complete a losing line.  
\bibliography{mybib}{}
\bibliographystyle{abbrv}

\end{document}